\documentclass{amsart}
\usepackage{graphicx}
 \usepackage{stmaryrd}
\usepackage{color}
\vfuzz2pt 
\hfuzz2pt 

\def\({\left (}
\def\){\right )}
\def\<{\left\langle}
\def\>{\right\rangle}

 \newtheorem{thm}{Theorem}[section]
\newtheorem{cor}[thm]{Corollary}
\newtheorem{lem}[thm]{Lemma}

\newtheorem{defn}[thm]{Definition}
\theoremstyle{remark}
\newtheorem{rem}[thm]{Remark}
\newtheorem*{acknowledgement}{Acknowledgement}

\newcommand{\norm}[1]{\left\Vert#1\right\Vert}
\newcommand{\abs}[1]{\left\vert#1\right\vert}
\newcommand{\set}[1]{\left\{#1\right\}}
\newcommand{\Real}{\mathbb R}

\newcommand{\pfrac}[2]{\frac{\partial #1}{\partial #2}}

\begin{document}

\title{Higher order neck analysis of harmonic maps and its applications}

\author{Hao Yin}

\address{Hao Yin,  School of Mathematical Sciences,
University of Science and Technology of China, Hefei, China}
\email{haoyin@ustc.edu.cn }

\begin{abstract}
	In this paper, we prove some refined estimate in the neck region when a sequence of harmonic maps from surfaces blow up. The new estimate allows us to see the shape of the center of the neck region. As an application, we prove an inequality about the nullity and index when blow-up occurs.
\end{abstract}
\maketitle

\section{Introduction}
Suppose that $(M,g)$ and $(N,h)$ are two closed Riemannian manifolds. For a map $u$ from $M$ to $N$, the Dirichlet energy is defined to be
\begin{equation*}
	E(u)=\frac{1}{2}\int_M \abs{\nabla u}^2 dV_g.
\end{equation*}
The critical points of $E$ are called harmonic maps. When the dimension of $M$ is $2$, this functional $E$ is conformally invariant and the theory of harmonic maps becomes more interesting. Among other things, there are the well known energy concentration and the bubbling phenomenon for a sequence of harmonic maps with uniformly bounded energy (\cite{sacks1981existence}).  

For simplicity, we restrict ourselves to the case that $M=B_1$ is the unit ball in $\Real^2$ with flat metric and assume that there is one and only one bubble. More precisely, let $u_i$ be a sequence of harmonic maps from $B_1$ to $N$ (isometrically embedded in $\Real^p$) with uniformly bounded energy, satisfying

(i) $u_i$ converges locally smoothly in $\bar{B}_1\setminus \set{0}$ to a limit map $u_\infty$, which by the removable singularity theorem (\cite{sacks1981existence}), can be extended to be a smooth harmonic map from $B_1$ to $N$;

(ii) there is a sequence $\lambda_i$ going to zero such that $u_i(\lambda_i y)$ converges locally smoothly to a bubble map $\omega:\Real^2 \to N$;

(iii) $\omega$ is the only bubble.

The classical energy identity (see \cite{Jost1991dimensional}, \cite{ding1995energy}, \cite{Qing1995singularities}, \cite{Wang1996Bubble})  and the no-neck theorem (see \cite{parker1993pseudo}, \cite{LW1998}, \cite{QT1997}) imply that
\begin{equation*}
	\lim_{\delta\to 0} \lim_{i\to\infty} \int_{B_\delta\setminus B_{\lambda_i /\delta}} \abs{\nabla u_i}^2 dx =0
\end{equation*}
and
\begin{equation*}
	\lim_{\delta\to 0} \lim_{i\to\infty} \text{osc}_{B_\delta\setminus B_{\lambda_i /\delta}} u_i =0.
\end{equation*}
In fact, the proof gives more. Precisely, there exists some $\alpha>0$ such that (in polar coordinates)
\begin{equation*}
	\abs{r\partial_r u_i}, \abs{\partial_\theta u_i} \leq C(\delta) \eta^\alpha, \qquad \forall r\in (\lambda_i/\delta ,\delta),
\end{equation*}
where $\eta(r,\theta)=r+ \frac{\lambda_i}{r}$.
As a consequence, there exists $P_i\in \Real^p$ such that
\begin{equation}\label{eqn:holder}
	\abs{u_i-P_i} \leq C\eta^\alpha \qquad \text{on} \quad B_\delta \setminus B_{\lambda_i /\delta}.
\end{equation}
It makes sense to call \eqref{eqn:holder} some uniform H\"older estimate of $u_i$ on the neck. 

This paper studies the question: what more can we say about $u_i$ on the neck region? Our main result is in some sense the $C^{1,\alpha}$ uniform estimate on the neck.

\begin{thm}
	\label{thm:main1} Suppose $u_i$ is a sequence of harmonic maps satisfying (i-iii). There exist {\it uniformly bounded} coefficients $p_i, q_i, a_{i}, b_{i}, c_{i}$ and $d_{i}$ such that for any $\alpha\in (0,1)$, $t\in [\log \lambda_i/\delta, \log \delta]$,
	\begin{equation*}
		u_i= p_i + q_i t + a_{i} e^t\cos \theta + b_{i} e^t\sin \theta + c_{i} \lambda_i e^{-t}\cos \theta + d_{i} \lambda_i e^{-t}\sin \theta + O(\eta^{1+\alpha}),
	\end{equation*}
	where $O(\eta^{1+\alpha})$ is some function uniformly bounded by $C \eta^{1+\alpha}$.

	Moreover, 
	\begin{equation}\label{eqn:moreover}
		\abs{\abs{q_i}^2 -2\lambda_i (a_{i}\cdot c_{i} + b_{i}\cdot d_{i})} \leq C \lambda_i^{\alpha/2+1}.
	\end{equation}
\end{thm}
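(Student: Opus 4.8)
The natural framework is to work on the cylinder via the conformal change $r = e^t$, so the neck becomes a long cylinder $[\log(\lambda_i/\delta), \log\delta] \times S^1$, and the harmonic map equation reads $\partial_t^2 u_i + \partial_\theta^2 u_i = -A(u_i)(\nabla u_i, \nabla u_i)$, where the right-hand side is the second fundamental form of $N \hookrightarrow \Real^p$ applied to the gradient. By the Hölder estimate \eqref{eqn:holder} and the companion derivative bounds $|r\partial_r u_i|, |\partial_\theta u_i| \le C(\delta)\eta^\alpha$ already available, the nonlinearity is quadratic in these quantities and hence bounded by $C\eta^{2\alpha}$ pointwise. The plan is to decompose $u_i - P_i$ (which is small, Hölder-size) into Fourier modes in $\theta$: the $0$-mode produces the $p_i + q_i t$ part (solutions of $\partial_t^2 f = 0$), the $\pm 1$ modes produce the $e^{\pm t}\cos\theta$, $e^{\pm t}\sin\theta$ terms (solutions of $\partial_t^2 f - f = 0$), and all higher modes $|n|\ge 2$ decay like $e^{-2|t - t_0|}$ from both ends and contribute only to the $O(\eta^{1+\alpha})$ error since $e^{\pm 2t}$ and $\lambda_i^2 e^{\mp 2t}$ are both dominated by $\eta^{1+\alpha}$ on the neck (here one uses $\lambda_i \le \eta^2$). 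The first half of the theorem is then a fairly standard (though delicate, because of the dependence of constants on $\delta$ and $i$) ODE/PDE comparison argument: solve the linear equation with the measured boundary data, estimate the particular solution driven by the $O(\eta^{2\alpha})$ nonlinearity using the Green's function on the cylinder, and iterate/bootstrap to upgrade $2\alpha$ to $1+\alpha$ (possibly after shrinking $\alpha$, which is harmless since the statement is for arbitrary $\alpha \in (0,1)$). Uniform boundedness of the six coefficients comes from matching to the inner bubble scale and outer limit scale, where $u_i$ and its derivatives are uniformly controlled.

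**Proving the identity \eqref{eqn:moreover}: the Pohozaev/Hopf-differential approach.**

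For the ``Moreover'' part, the key object is the Hopf differential of a harmonic map: $\Phi_i(z)\,dz^2$ where, in the $(t,\theta)$ coordinates, $\Phi_i = |\partial_t u_i|^2 - |\partial_\theta u_i|^2 - 2\sqrt{-1}\,\langle \partial_t u_i, \partial_\theta u_i\rangle$. For a harmonic map from a surface, $\Phi_i$ is holomorphic, so $\Psi_i(t) := \int_{S^1} \Phi_i(t,\theta)\, d\theta$ is independent of $t$ — this is exactly the conservation law that underlies the energy identity and no-neck theorems. The plan is to compute $\Psi_i$ (in fact just its real part, $\mathrm{Re}\,\Psi_i = \int_{S^1} (|\partial_t u_i|^2 - |\partial_\theta u_i|^2)\,d\theta$) in two ways, or rather to evaluate the constant $\mathrm{Re}\,\Psi_i$ by plugging the expansion from the first part of the theorem into this $t$-independent integral. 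Substituting $u_i = p_i + q_i t + a_i e^t\cos\theta + \cdots + O(\eta^{1+\alpha})$ and computing $\partial_t u_i$, $\partial_\theta u_i$: the $0$-mode contributes $|q_i|^2$ to $\int |\partial_t u_i|^2$ and nothing to $\int|\partial_\theta u_i|^2$; the $\pm 1$ modes contribute cross terms. A direct computation gives, modulo errors, $\int_{S^1}|\partial_t u_i|^2 d\theta - \int_{S^1}|\partial_\theta u_i|^2 d\theta = 2\pi\big(|q_i|^2 - 2\lambda_i(a_i\cdot c_i + b_i\cdot d_i)\big) + (\text{error})$, because the $e^t\cos\theta$ term has $\partial_t$-derivative and $\partial_\theta$-derivative of the same size and these cancel in $|\partial_t|^2 - |\partial_\theta|^2$, leaving only the genuine cross term between the $a_i e^t$ and $c_i\lambda_i e^{-t}$ pieces (whose product is $t$-independent). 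So the $t$-independent part of $\mathrm{Re}\,\Psi_i/(2\pi)$ equals $|q_i|^2 - 2\lambda_i(a_i\cdot c_i + b_i\cdot d_i)$.

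**Controlling the error and closing the argument.**

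It then remains to show that $\mathrm{Re}\,\Psi_i$ itself is small — of order $C\lambda_i^{\alpha/2 + 1}$. The idea is to evaluate the $t$-independent quantity $\mathrm{Re}\,\Psi_i$ at a convenient value of $t$, namely near the center of the neck where $\eta$ is smallest; $\eta$ achieves its minimum $\sim \sqrt{\lambda_i}$ at $r \sim \sqrt{\lambda_i}$, i.e. $t \sim \tfrac12\log\lambda_i$. At that point the full gradient $|\nabla u_i| \le C\eta^\alpha \le C\lambda_i^{\alpha/2}$, so $|\Phi_i| \le C\lambda_i^{\alpha}$ pointwise and hence $|\mathrm{Re}\,\Psi_i| \le C\lambda_i^{\alpha}$ — but this is not quite the claimed power. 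To get the sharper $\lambda_i^{1+\alpha/2}$ one must use the expansion more carefully: at $t \sim \tfrac12\log\lambda_i$ one has $e^t \sim \lambda_i e^{-t} \sim \sqrt{\lambda_i}$, so the linear-in-$t$ and mode-$\pm1$ terms in $\partial_t u_i$, $\partial_\theta u_i$ all contribute and must be tracked, and the $O(\eta^{1+\alpha})$ remainder contributes an error of size $\eta^{1+\alpha}\cdot\eta^{\alpha} = \eta^{1+2\alpha} \lesssim \lambda_i^{(1+2\alpha)/2}$ to $\Phi_i$, whence $\lambda_i^{1/2+\alpha}\cdot(\text{length factor})$ — actually the cleanest route is to observe that $\mathrm{Re}\,\Psi_i$ equals its value computed from the \emph{explicit} part of the expansion plus an error controlled by cross-terms between the explicit part (size $\le C\sqrt{\lambda_i}$ for derivatives of the mode-$\pm1$ and mode-$0$ pieces, at $t\sim\frac12\log\lambda_i$) and the $O(\eta^{1+\alpha})$ remainder (whose derivatives are $O(\eta^\alpha)$ by interior elliptic estimates on the cylinder), giving $\sqrt{\lambda_i}\cdot \lambda_i^{\alpha/2} = \lambda_i^{(1+\alpha)/2}$, plus the square of the remainder derivative $O(\eta^{2\alpha}) = O(\lambda_i^{\alpha})$; balancing these forces the precise exponent $\alpha/2 + 1$ after also noting $q_i$ itself is $O(\sqrt{\lambda_i})$ in the relevant regime. \textbf{The main obstacle} I anticipate is precisely this last error bookkeeping: getting the exponent in \eqref{eqn:moreover} exactly right requires knowing not just that the remainder is $O(\eta^{1+\alpha})$ but that \emph{its first derivatives} are $O(\eta^{\alpha})$ (so a $C^{1,\alpha}$-type statement on the remainder, obtained by interior Schauder estimates for the harmonic map system on unit cylinders after rescaling), together with the observation that $|q_i|$ and the mode-$\pm1$ coefficients combine so that the would-be leading error terms cancel against each other — so the clean proof really wants the conservation of the Hopf differential to do the cancellation automatically, rather than verifying it term by term.
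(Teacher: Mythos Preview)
Your overall architecture is the same as the paper's: pass to cylinder coordinates, bound the nonlinearity $A(u)(\tilde\nabla u,\tilde\nabla u)$ by $C\eta^{2\alpha}$, solve the Poisson equation on the long cylinder to peel off a harmonic part, read off the low Fourier modes of that harmonic function, and iterate; then prove \eqref{eqn:moreover} from the Pohozaev identity (the real part of your Hopf-differential integral is exactly the paper's $\int_{S^1}|\partial_t u|^2=\int_{S^1}|\partial_\theta u|^2$), evaluated at $t=\tfrac12\log\lambda_i$. Two points, however, are not handled correctly in your sketch and would cause the argument to fail as written.

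\medskip
\textbf{(1) The iteration needs Pohozaev already at the intermediate steps.} When $2\alpha<1$, after one pass you obtain $u=p+qt+O(\eta^{2\alpha})$. To iterate you must feed $\tilde\nabla u=O(\eta^{2\alpha})$ back into the nonlinearity, but $\partial_t u = q + O(\eta^{2\alpha})$ and a priori $|q|$ is only bounded by $C/|\log\lambda_i|$ (from the harmonic expansion), which is \emph{not} $O(\eta^{2\alpha})$. The paper inserts $u=p+qt+O(\eta^{2\alpha})$ into the Pohozaev identity at this stage, uses Young's inequality to deduce $|q|\le C\lambda_i^{\alpha}$, and only then absorbs $qt$ into $O(\eta^{2\alpha})$ to close the loop. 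Your plan omits this step; without it the bootstrapping stalls.

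\medskip
\textbf{(2) The remainder's derivatives are $O(\eta^{1+\alpha})$, not merely $O(\eta^{\alpha})$.} In the paper, $O(\eta^\beta)$ is \emph{defined} so that all $(t,\theta)$-derivatives satisfy the same bound, and this is preserved through the key-lemma construction of the particular solution plus interior elliptic estimates on unit cylinder pieces. Your claim that the derivatives of the remainder are only $O(\eta^{\alpha})$ is too weak: with that bound, the cross term between the explicit mode-$1$ derivatives (size $\eta$) and the remainder's derivative gives $O(\eta^{1+\alpha})$, which at the center is $O(\lambda_i^{(1+\alpha)/2})$, \emph{not} the required $O(\lambda_i^{1+\alpha/2})$. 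With the correct bound $\partial(\text{remainder})=O(\eta^{1+\alpha})$, the cross term is $O(\eta^{2+\alpha})$ and one gets the right power. The paper also first extracts $|q|\le C\sqrt{\lambda_i}$ by a preliminary Young's-inequality step in the Pohozaev computation, so that the $q\cdot O(\eta^{1+\alpha})$ cross term is likewise $O(\eta^{2+\alpha})$ at the center; this is the bookkeeping you flagged as the ``main obstacle,'' and it is resolved exactly by these two observations.
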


The coefficients above are meaningful in an asymptotic sense. First, it is obvious that $p_i$ converges to $p_\infty$, where the limit map and the bubble touch (by the no neck theorem). Second, the limit map $u_\infty$ has Taylor expansion at the origin
	\begin{equation}\label{eqn:expu}
		u_\infty= p_\infty + a_\infty x_1 + b_\infty x_2 + o(\abs{x}).
	\end{equation}
	Theorem \ref{thm:main1} implies that $\lim_{i\to\infty}a_i=a_\infty$ and $\lim_{i\to\infty} b_i=b_\infty$.  
	Similarly, let $\tilde{\omega}(y_1,y_2)= \omega(\frac{y_1}{\abs{y}^2}, \frac{y_2}{\abs{y}^2})$. By the removable singularity theorem, $\tilde{\omega}$ is smooth at the origin so that we have 
	\begin{equation}\label{eqn:expomega}
		\tilde{\omega}= p_\infty + c_\infty y_1 + d_\infty y_2 + o(\abs{y}).
	\end{equation}
	Theorem \ref{thm:main1} also implies that $\lim_{i\to\infty} c_i=c_\infty$ and $\lim_{i\to\infty} d_i=d_\infty$.
	
	\begin{rem}
		We do not have a precise interpretation for $q_i$ except \eqref{eqn:moreover}, which is not enough to determine $q_i$.	
	\end{rem}

The proof of Theorem \ref{thm:main1} is some neck version of elliptic regularity. The basic idea is to compare the solution of PDE with polynomials (see \cite{caffarelli}, \cite{CC}). Here we adapt an approach in \cite{yin2018schauder}. There is a key lemma in the argument (see Lemma \ref{lem:key}) that improves the regularity by solving Poisson equations. An important observation here is that this result works on cylinders of unknown length uniformly.

As an application of Theorem \ref{thm:main1}, we discuss the geometry of the center part of the neck. Previously, we knew that on each segment of finite length, $u_i$ is close to a constant map uniformly, especially around the center of the neck $\set{t=\frac{1}{2}\log \lambda_i}$. Now, with the help of Theorem \ref{thm:main1}, we can do a scale up to see more details of $u_i$ in this part.
\begin{cor}\label{cor:harmonic}
	Let $u_i$ be as in Theorem \ref{thm:main1}. Then the sequence
	\begin{equation*}
		v_i(s,\theta):=\frac{1}{\sqrt{\lambda_i}} \left( u_i(s+\frac{1}{2}\log \lambda_i, \theta)- (p_i+q_i\frac{1}{2}\log \lambda_i) \right)
	\end{equation*}
	converges smoothly locally on $(-\infty,+\infty)\times S^1$ to
	\begin{equation}\label{eqn:vinf}
		v_\infty(s,\theta)= q_\infty s + a_\infty e^s \cos\theta + b_\infty e^s \sin\theta + c_\infty e^{-s}\cos \theta + d_\infty e^{-s}\sin \theta,
	\end{equation}
	where
	\begin{equation*}
		\abs{q_\infty}^2= 4 (a_\infty\cdot c_\infty + b_\infty \cdot d_\infty).
	\end{equation*}
\end{cor}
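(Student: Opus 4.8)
The plan is to feed the expansion from Theorem \ref{thm:main1} directly into the definition of $v_i$ and then upgrade the resulting $C^0$ convergence to smooth local convergence using interior estimates for the harmonic map equation. First I would substitute $t = s + \frac{1}{2}\log\lambda_i$ into the main expansion. Since $e^t = \sqrt{\lambda_i}\, e^s$ and $\lambda_i e^{-t} = \sqrt{\lambda_i}\, e^{-s}$, the terms $a_i e^t\cos\theta$, $b_i e^t\sin\theta$, $c_i \lambda_i e^{-t}\cos\theta$, $d_i\lambda_i e^{-t}\sin\theta$ each carry exactly one factor of $\sqrt{\lambda_i}$, while $q_i t = q_i s + q_i \frac{1}{2}\log\lambda_i$ so that the $p_i + q_i\frac{1}{2}\log\lambda_i$ subtraction kills the divergent constant and leaves $q_i s$, which again pairs with the $1/\sqrt{\lambda_i}$ prefactor after noting $q_i s$ is $O(\sqrt{\lambda_i})$ in the relevant range—more precisely, \eqref{eqn:moreover} gives $\abs{q_i}^2 \le 2\lambda_i(\abs{a_i}\abs{c_i}+\abs{b_i}\abs{d_i}) + C\lambda_i^{1+\alpha/2} \le C\lambda_i$, so $q_i/\sqrt{\lambda_i}$ is uniformly bounded. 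Hence
\begin{equation*}
	v_i(s,\theta) = \frac{q_i}{\sqrt{\lambda_i}} s + \frac{a_i}{\sqrt{\lambda_i}}\cdot\sqrt{\lambda_i}\, e^s\cos\theta + \cdots + \frac{1}{\sqrt{\lambda_i}} O(\eta^{1+\alpha}),
\end{equation*}
and since at $t = s + \frac{1}{2}\log\lambda_i$ we have $\eta = e^t + \lambda_i e^{-t} = \sqrt{\lambda_i}(e^s + e^{-s})$, the error term is $\frac{1}{\sqrt{\lambda_i}} O(\lambda_i^{(1+\alpha)/2}(e^s+e^{-s})^{1+\alpha}) = O(\lambda_i^{\alpha/2})\cdot(e^s+e^{-s})^{1+\alpha}$, which tends to zero locally uniformly in $s$.

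Next I would extract convergent subsequences: by the uniform boundedness of $p_i, q_i, a_i, b_i, c_i, d_i$ together with the bound $\abs{q_i} \le C\sqrt{\lambda_i}$, the normalized coefficients $q_i/\sqrt{\lambda_i}$ are bounded and (passing to a subsequence) converge to some $q_\infty$, while $a_i \to a_\infty$, etc., as already identified in the discussion after Theorem \ref{thm:main1} via \eqref{eqn:expu} and \eqref{eqn:expomega}. The $C^0$ computation above then shows $v_i \to v_\infty$ as in \eqref{eqn:vinf} uniformly on compact subsets of $\R \times S^1$. To promote this to smooth convergence, I would observe that $v_i$ satisfies the rescaled harmonic map equation: if $u_i$ solves $\Delta u_i + A(u_i)(\nabla u_i, \nabla u_i) = 0$ in the cylindrical coordinates $(t,\theta)$, then $v_i$ solves $\Delta v_i + \sqrt{\lambda_i}\, A(u_i)(\nabla v_i, \nabla v_i) = 0$; since $\nabla v_i$ is uniformly bounded on compacts (from the $C^0$ bound plus elliptic estimates bootstrapped from the equation, or directly from the gradient bound $\abs{r\partial_r u_i}, \abs{\partial_\theta u_i} \le C(\delta)\eta^\alpha$ rescaled) and $\sqrt{\lambda_i} \to 0$, standard elliptic $L^p$ and Schauder estimates on fixed compact cylinders give uniform $C^{k}$ bounds for every $k$, hence smooth subsequential convergence, and the limit is forced to be $v_\infty$ by the already-established $C^0$ convergence.

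Finally, the identity $\abs{q_\infty}^2 = 4(a_\infty \cdot c_\infty + b_\infty\cdot d_\infty)$ follows by dividing \eqref{eqn:moreover} by $\lambda_i$ and passing to the limit: $\abs{q_i/\sqrt{\lambda_i}}^2 - 2(a_i\cdot c_i + b_i\cdot d_i) = O(\lambda_i^{\alpha/2}) \to 0$, so $\abs{q_\infty}^2 = 2(a_\infty\cdot c_\infty + b_\infty\cdot d_\infty)$—wait, I should double-check the constant: \eqref{eqn:moreover} reads $\abs{\abs{q_i}^2 - 2\lambda_i(a_i\cdot c_i + b_i\cdot d_i)} \le C\lambda_i^{1+\alpha/2}$, and dividing by $\lambda_i$ gives $\abs{\abs{q_i}^2/\lambda_i - 2(a_i\cdot c_i + b_i\cdot d_i)} \le C\lambda_i^{\alpha/2}$; since $q_\infty = \lim q_i/\sqrt{\lambda_i}$, we get $\abs{q_\infty}^2 = 2(a_\infty\cdot c_\infty + b_\infty\cdot d_\infty)$, and the factor $4$ in the Corollary presumably comes from a normalization in $v_\infty$—indeed $v_\infty$ contains $q_\infty s$ and matching the $s$-coefficient against the substitution $q_i t$ with $t = s + \frac12\log\lambda_i$ shows the coefficient of $s$ in $v_\infty$ is $\lim q_i/\sqrt{\lambda_i}$, while the asymptotic data $a_\infty,\dots$ are the same, so I would simply track constants carefully here. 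The main obstacle is thus not conceptual but bookkeeping: correctly matching the scaling factors and constants between the $t$-expansion and the $s$-expansion, and verifying the uniform higher-order elliptic bounds needed to pass from $C^0$ to $C^\infty_{\loc}$ convergence; once \eqref{eqn:moreover} and Theorem \ref{thm:main1} are in hand, everything else is routine rescaling and elliptic regularity.
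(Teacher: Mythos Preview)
Your approach is correct and matches the paper's: substitute $t=s+\tfrac12\log\lambda_i$ into the expansion of Theorem~\ref{thm:main1}, note that $\eta=\sqrt{\lambda_i}(e^s+e^{-s})$ so the error becomes $\lambda_i^{\alpha/2}(e^s+e^{-s})^{1+\alpha}\to 0$ locally, and extract a subsequential limit using the uniform bounds on $q_i/\sqrt{\lambda_i},a_i,b_i,c_i,d_i$. One simplification you are missing: the paper's $O(\eta^{1+\alpha})$ notation (Definition~\ref{defn:O}) already controls \emph{all} partial derivatives, so after rescaling the error term goes to zero in $C^\infty_{\mathrm{loc}}$ directly, and no elliptic bootstrapping is needed. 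Your route through the rescaled harmonic map equation $\tilde\triangle v_i+\sqrt{\lambda_i}\,A(u_i)(\tilde\nabla v_i,\tilde\nabla v_i)=0$ is also fine, just slightly longer.

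On the constant: your computation is right and your hesitation is warranted. Dividing \eqref{eqn:moreover} by $\lambda_i$ yields $\abs{q_\infty}^2=2(a_\infty\cdot c_\infty+b_\infty\cdot d_\infty)$, not $4$. This is consistent with \eqref{eqn:q2} in Theorem~\ref{thm:main2}, which the paper derives independently from the conformality of $v_\infty$ and also gives the factor $2$. The ``$4$'' in the statement of Corollary~\ref{cor:harmonic} is a typo in the paper; there is no hidden normalization that would produce it.
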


In case $(\abs{a_\infty}^2+ \abs{b_\infty}^2)(\abs{c_\infty}^2 + \abs{d_\infty}^2)\ne 0$, the map $v_\infty$ is the result of a balance of influences from both sides of the neck. 
We may derive more consequences when $u_i$'s are weakly conformal, in addition to being harmonic map. Recall that weakly conformal harmonic maps are minimal immersions and that the limit of weakly conformal harmonic maps is a weakly conformal harmonic map. Hence, the limit map $u_\infty$ and the bubble map $\omega$ are weakly conformal. In particular,
\begin{gather*}
	\abs{a_\infty}^2=\abs{b_\infty}^2;\qquad  a_\infty\cdot b_\infty=0; \\ 
	\abs{c_\infty}^2=\abs{d_\infty}^2;\qquad  c_\infty\cdot d_\infty=0. \\ 
\end{gather*}

The conformality of $v_\infty$ puts some restrictions to the coefficients in \eqref{eqn:vinf}.
\begin{thm} \label{thm:main2}
	Suppose	that $u_i$ is a sequence of weakly conformal harmonic maps satisfying (i)-(iii) above. 
	For the $v_\infty$ map given in Corollary \ref{cor:harmonic}, we have
	\begin{gather}
		q_\infty\cdot a_\infty = q_\infty \cdot b_\infty = q_\infty\cdot c_\infty =q_\infty\cdot d_\infty =0;
		\label{eqn:q1}\\
		\abs{q_\infty}^2= 2( a_\infty \cdot c_\infty + b_\infty \cdot d_\infty) ;
		\label{eqn:q2} \\
		a_\infty\cdot d_\infty -b_\infty\cdot c_\infty=0.
		\label{eqn:q3}
	\end{gather}
\end{thm}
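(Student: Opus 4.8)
The plan is to exploit the conformality relation
\[
\abs{\partial_s v_i}^2 = \abs{\partial_\theta v_i}^2, \qquad \partial_s v_i \cdot \partial_\theta v_i = 0,
\]
which holds for each $u_i$ (hence for $v_i$, after the affine shift and the $\sqrt{\lambda_i}$ rescaling, since both the shift and the rescaling are conformal up to constants) and passes to the smooth limit $v_\infty$ on $(-\infty,+\infty)\times S^1$ by Corollary \ref{cor:harmonic}. So the whole theorem reduces to plugging the explicit formula \eqref{eqn:vinf} into these two pointwise identities and reading off what the vanishing of every Fourier mode in $\theta$ forces on the vectors $q_\infty,a_\infty,b_\infty,c_\infty,d_\infty \in \Real^p$.

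Concretely, I would compute $\partial_s v_\infty = q_\infty + a_\infty e^s\cos\theta + b_\infty e^s\sin\theta - c_\infty e^{-s}\cos\theta - d_\infty e^{-s}\sin\theta$ and $\partial_\theta v_\infty = -a_\infty e^s\sin\theta + b_\infty e^s\cos\theta - c_\infty e^{-s}\sin\theta + d_\infty e^{-s}\cos\theta$. Forming $\abs{\partial_s v_\infty}^2 - \abs{\partial_\theta v_\infty}^2$ and $\partial_s v_\infty\cdot\partial_\theta v_\infty$ gives trigonometric polynomials in $\theta$ whose coefficients are various inner products of the five vectors, multiplied by powers $e^{2s}, e^{s}, 1, e^{-s}, e^{-2s}$. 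Since these must vanish identically in both $s$ and $\theta$, each coefficient vanishes separately. The $e^{\pm 2s}$ terms only involve $a_\infty,b_\infty$ and $c_\infty,d_\infty$ and just reproduce the known conformality of $u_\infty$ and $\omega$ (so they are automatically satisfied). The $e^{\pm s}$ terms are the ones carrying $q_\infty$: matching $\cos\theta$ and $\sin\theta$ coefficients at order $e^{s}$ yields $q_\infty\cdot a_\infty = q_\infty\cdot b_\infty = 0$, and at order $e^{-s}$ yields $q_\infty\cdot c_\infty = q_\infty\cdot d_\infty = 0$, which is \eqref{eqn:q1}. The $s$-independent ($e^0$) term of $\abs{\partial_s v_\infty}^2 - \abs{\partial_\theta v_\infty}^2$ produces, after using the already-established relations, exactly $\abs{q_\infty}^2 = 2(a_\infty\cdot c_\infty + b_\infty\cdot d_\infty)$, which is \eqref{eqn:q2} (consistent with the "moreover" part of Theorem \ref{thm:main1} in the limit). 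Finally the $s$-independent term of $\partial_s v_\infty\cdot\partial_\theta v_\infty$, again after cancellations, gives the cross relation $a_\infty\cdot d_\infty - b_\infty\cdot c_\infty = 0$, which is \eqref{eqn:q3}.

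I expect no serious obstacle here: once the passage of the conformality equations to the limit $v_\infty$ is justified (immediate from smooth local convergence in Corollary \ref{cor:harmonic}), everything is an elementary but slightly tedious bookkeeping of Fourier modes. The only mild care needed is to keep track of which inner products show up where — for instance, separating the contribution of $a_\infty\cdot c_\infty$ from $b_\infty\cdot d_\infty$ versus $a_\infty\cdot d_\infty$ and $b_\infty\cdot c_\infty$ in the mixed $e^{s}\cdot e^{-s} = 1$ terms — and to double-check signs coming from differentiating $\cos\theta$ and $\sin\theta$. A convenient organizational trick is to write $z = a_\infty\cos\theta + b_\infty\sin\theta$ and $w = c_\infty\cos\theta + d_\infty\sin\theta$, so that $v_\infty = q_\infty s + e^s z + e^{-s} w$, compute $\partial_s v_\infty = q_\infty + e^s z - e^{-s} w$ and $\partial_\theta v_\infty = e^s z' + e^{-s} w'$ where $z' = -a_\infty\sin\theta + b_\infty\cos\theta$ and $w' = -c_\infty\sin\theta + d_\infty\cos\theta$; then the two conformality identities expand into just a handful of terms, and integrating against $1, \cos\theta, \sin\theta, \cos 2\theta, \sin 2\theta$ over $S^1$ isolates each relation cleanly.
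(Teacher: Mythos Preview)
Your proposal is correct and follows essentially the same approach as the paper: both argue that weak conformality passes from the $u_i$ to the limit $v_\infty$, then expand $\abs{\partial_s v_\infty}^2-\abs{\partial_\theta v_\infty}^2$ and $\partial_s v_\infty\cdot\partial_\theta v_\infty$ explicitly and read off \eqref{eqn:q1}--\eqref{eqn:q3} from the vanishing of each Fourier/$e^{ns}$ coefficient. Your $z,w$ shorthand is a nice organizational device, but the substance is identical to the paper's brute-force expansion.
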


When $\dim N=3$, the equations \eqref{eqn:q1}, \eqref{eqn:q2} and \eqref{eqn:q3} imply that the tangent planes (if not degenerate) of the limit map and the bubble at the touching point coincide.
\begin{cor}\label{cor:minimal}
	Assume that $\dim N=3$ and $(\abs{a_\infty}^2+ \abs{b_\infty}^2)(\abs{c_\infty}^2 + \abs{d_\infty}^2)\ne 0$. The planes spanned by $(a_\infty,b_\infty)$ and  $(c_\infty,d_\infty)$ are the same (modulo orientation). Moreover, if $q_\infty=0$, the orientations are opposite and if $q_\infty\ne 0$, one must have $a_\infty=\lambda c_\infty$ and $b_\infty=\lambda d_\infty$ for some positive $\lambda$, in which case the surface parametrized by \eqref{eqn:vinf} is the catenoid. 
\end{cor}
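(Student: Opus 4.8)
The statement is entirely linear-algebraic: the plan is to substitute the relations of Theorem~\ref{thm:main2}, together with the weak conformality identities $\abs{a_\infty}=\abs{b_\infty}$, $a_\infty\cdot b_\infty=0$, $\abs{c_\infty}=\abs{d_\infty}$, $c_\infty\cdot d_\infty=0$ recorded just before it, into $\R^3$ and to split according to whether $q_\infty$ vanishes. Throughout, write $\rho=\abs{a_\infty}=\abs{b_\infty}$, which is nonzero by hypothesis, so that $\{a_\infty,b_\infty\}$ is an orthogonal basis of $\Pi_1:=\mathrm{span}(a_\infty,b_\infty)$; likewise $\{c_\infty,d_\infty\}$ is an orthogonal basis of $\Pi_2:=\mathrm{span}(c_\infty,d_\infty)$. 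Put $\alpha=a_\infty\cdot c_\infty$, $\beta=b_\infty\cdot c_\infty$, $\gamma=a_\infty\cdot d_\infty$, $\delta=b_\infty\cdot d_\infty$; note \eqref{eqn:q3} says $\gamma=\beta$ and \eqref{eqn:q2} says $\alpha+\delta=\tfrac12\abs{q_\infty}^2$. The hypothesis $\dim N=3$ will be used precisely to say that $q_\infty^\perp$ (resp.\ $\Pi_1^\perp$) is a plane (resp.\ a line).

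First I would treat the case $q_\infty\ne0$. By \eqref{eqn:q1} the four vectors $a_\infty,b_\infty,c_\infty,d_\infty$ all lie in the $2$-plane $q_\infty^\perp$; since $a_\infty,b_\infty$ already span a plane, $\Pi_1=\Pi_2=q_\infty^\perp$. Expanding $c_\infty,d_\infty$ in the orthogonal basis $\{a_\infty,b_\infty\}$, the relation $c_\infty\cdot d_\infty=0$ reads $\alpha\gamma+\beta\delta=0$, i.e.\ $\beta(\alpha+\delta)=0$ by $\gamma=\beta$; since $\alpha+\delta=\tfrac12\abs{q_\infty}^2>0$, we get $\beta=\gamma=0$. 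Then $\abs{c_\infty}=\abs{d_\infty}$ gives $\alpha^2=\delta^2$, so $\alpha=\delta>0$, hence $c_\infty=\tfrac{\alpha}{\rho^2}a_\infty$ and $d_\infty=\tfrac{\alpha}{\rho^2}b_\infty$; equivalently $a_\infty=\lambda c_\infty$ and $b_\infty=\lambda d_\infty$ with $\lambda=\rho^2/\alpha>0$ (and the change of basis being $\lambda\,\mathrm{Id}$, the two planes carry the same orientation). Substituting into \eqref{eqn:vinf} and writing $\lambda e^s+e^{-s}=2\sqrt\lambda\cosh(s-s_0)$ with $s_0=-\tfrac12\log\lambda$, I get
\begin{equation*}
	v_\infty=q_\infty s+2\sqrt\lambda\cosh(s-s_0)\,(c_\infty\cos\theta+d_\infty\sin\theta).
\end{equation*}
Since $q_\infty\perp c_\infty\perp d_\infty$ and $\abs{c_\infty}=\abs{d_\infty}$, a rotation and dilation of $\R^3$ together with the shift $s\mapsto s+s_0$ carry the right-hand side to the standard conformal parametrization $(\cosh t\cos\theta,\cosh t\sin\theta,t)$ of the catenoid. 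The only real point to verify is that the dilation factor along $q_\infty$ matches the one along $\Pi_1$, i.e.\ that $\abs{\partial_s v_\infty}^2=\abs{\partial_\theta v_\infty}^2$ — but this is exactly the content of \eqref{eqn:q2}, so it is automatic.

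Next I would treat $q_\infty=0$. Decompose $c_\infty=c_\infty'+c_\infty''$ and $d_\infty=d_\infty'+d_\infty''$ into components in $\Pi_1$ and in $\Pi_1^\perp$. Now \eqref{eqn:q2} gives $\alpha+\delta=0$, while \eqref{eqn:q3} gives $\gamma=\beta$, so the $\Pi_1$-parts satisfy $c_\infty'\cdot d_\infty'=\tfrac1{\rho^2}(\alpha\gamma+\beta\delta)=\tfrac1{\rho^2}\beta(\alpha+\delta)=0$ and $\abs{c_\infty'}^2=\tfrac1{\rho^2}(\alpha^2+\beta^2)=\tfrac1{\rho^2}(\gamma^2+\delta^2)=\abs{d_\infty'}^2$. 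Comparing with $c_\infty\cdot d_\infty=0$ and $\abs{c_\infty}=\abs{d_\infty}$ forces $c_\infty''\cdot d_\infty''=0$ and $\abs{c_\infty''}=\abs{d_\infty''}$, and since $\Pi_1^\perp$ is one-dimensional this means $c_\infty''=d_\infty''=0$. Hence $c_\infty,d_\infty\in\Pi_1$, so $\Pi_2=\Pi_1$, and with $\gamma=\beta$, $\delta=-\alpha$ the change of basis is $c_\infty=\tfrac\alpha{\rho^2}a_\infty+\tfrac\beta{\rho^2}b_\infty$, $d_\infty=\tfrac\beta{\rho^2}a_\infty-\tfrac\alpha{\rho^2}b_\infty$, whose determinant $-\tfrac{\alpha^2+\beta^2}{\rho^4}$ is strictly negative (nonzero because $c_\infty\ne0$ now lies in $\Pi_1$): the orientations are opposite.

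I expect no analytic obstacle. The only step needing a little care is the final identification, in the case $q_\infty\ne0$, of the surface as the \emph{catenoid} rather than some other surface of revolution, which comes down to checking that the two conformal factors of $v_\infty$ coincide — and this is precisely \eqref{eqn:q2}. One should also be explicit that $\dim N=3$ is essential: it is what makes $q_\infty^\perp$ a plane and $\Pi_1^\perp$ a line, and the conclusion fails in higher dimensions.
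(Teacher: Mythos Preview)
Your argument is correct and complete. It differs from the paper's proof mainly in organization: the paper first establishes $\Pi_1=\Pi_2$ by contradiction (assuming the four vectors span $\Real^3$, deducing $q_\infty=0$ from \eqref{eqn:q1}, and then exploiting the rotational freedom in \eqref{eqn:position} to reach a contradiction), and only afterwards fixes coordinates $a_\infty=(1,0)$, $b_\infty=(0,1)$ in the common plane to sort out orientations and the catenoid case. You instead split on $q_\infty$ at the outset and handle coincidence and orientation together in each branch. Your treatment of the case $q_\infty\ne0$ is more direct than the paper's: observing that $q_\infty^\perp$ is already a $2$-plane containing all four vectors settles $\Pi_1=\Pi_2$ in one line. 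In the case $q_\infty=0$, your orthogonal decomposition of $c_\infty,d_\infty$ into $\Pi_1$ and $\Pi_1^\perp$ components, showing the latter vanish because $\Pi_1^\perp$ is a line, is equivalent to the paper's coordinate computation but avoids the detour through a contradiction. Your catenoid identification is also more explicit than the paper's (which simply asserts it); your check that the axial and radial scalings agree via \eqref{eqn:q2} is exactly the point that needs verifying.
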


Our second application of Theorem \ref{thm:main1} is an index inequality. Given a harmonic map $u$, the nullity $\mbox{Nul}(u)$ is defined to be the maximal number of linearly independent Jacobi fields along $u$ and the index, $\mbox{Ind}(u)$, is the dimension of the maximal subspaces on which the Hessian of $E$ is negatively definite. We refer to Section \ref{sec:pre} for precise definitions. These quantities related to the second variation of the energy functional are important to applications (see \cite{micallef1988minimal}).

In this paper, we define
\begin{equation*}
	\mbox{NI}(u)=\mbox{Nul}(u) + \mbox{Ind}(u).
\end{equation*} 
We are interested in the change of $\mbox{NI}$ when blow-up occurs. For simplicity, we state the theorem under the one bubble assumption as before. 
\begin{thm}
	\label{thm:main3} Suppose that $u_i$ is a sequence of harmonic maps with uniformly bounded energy from a closed Riemannian surface $(M,g)$ to a closed Riemannian manifold $(N,h)$. Assume that $p\in M$ is the only energy concentration point and that in a coordinate neighborhood of $p$, (i)-(iii) holds. Then
	\begin{equation}\label{eqn:ni}
		\limsup_{i\to\infty} \mbox{NI}(u_i) \leq \mbox{NI}(u_\infty) + \mbox{NI}(\omega)
	\end{equation}
	and
	\begin{equation}\label{eqn:n}
		\limsup_{i\to\infty} \mbox{Nul}(u_i) \leq \mbox{Nul}(u_\infty) + \mbox{Nul}(\omega).
	\end{equation}
\end{thm}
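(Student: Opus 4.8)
The plan is to estimate $\mbox{NI}(u_i)$ from above by localizing eigenfunctions of the second variation (Jacobi) operator $L_{u_i}$ to three regions: the part of $M$ away from $p$ (where $u_i \to u_\infty$), the bubble region $\{|x| \lesssim \lambda_i\}$ (where $u_i(\lambda_i \cdot) \to \omega$), and the neck $\{\lambda_i/\delta < |x| < \delta\}$. The key point, which is exactly what Theorem \ref{thm:main1} buys us, is that on the neck the second variation form of $u_i$ is \emph{non-negative} up to a vanishing error, so the neck contributes nothing to the count of non-positive directions. Thus any subspace $V_i \subset W^{1,2}(u_i^* TN)$ on which the Hessian of $E$ is $\leq 0$ of dimension $\mbox{NI}(u_i)$ can, after cutting off, be split into contributions supported essentially on the outer region and on the bubble region, giving $\dim V_i \le \mbox{NI}(u_\infty) + \mbox{NI}(\omega)$ in the limit. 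The same cutting argument applied to genuine Jacobi fields (Hessian $=0$, not just $\le 0$) gives \eqref{eqn:n}, since a Jacobi field that is forced to be small on the neck and approximately harmonic there must, in the limit, restrict to a Jacobi field of $u_\infty$ on one side and of $\omega$ on the other, with the total dimension controlled.

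The key steps, in order, are as follows. \textbf{Step 1 (Neck positivity).} Fix a small $\delta$ and write $A_i = B_\delta \setminus B_{\lambda_i/\delta}$. Using the expansion in Theorem \ref{thm:main1}, show that for a vector field $W$ along $u_i$ supported in $A_i$,
\begin{equation*}
	\int_{A_i} |\nabla W|^2 - R(u_i)(W,W) \, dV \ge -\epsilon(\delta,i)\int_{A_i} \left( |\nabla W|^2 + |W|^2 \right) dV,
\end{equation*}
with $\epsilon(\delta,i)\to 0$ as $i\to\infty$ then $\delta \to 0$; here one uses that $|\nabla u_i|$ is controlled on the neck by $C(\delta)\eta^\alpha$ together with a Poincaré/Hardy inequality on the long cylinder $[\log\lambda_i/\delta, \log\delta]\times S^1$ to absorb the curvature term. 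Combined with a logarithmic cutoff function (the standard trick on a long thin neck) this shows the neck carries at most a negligible, and eventually zero, number of non-positive eigendirections. \textbf{Step 2 (Cutting and splitting).} Given $V_i$ of dimension $N_i := \mbox{NI}(u_i)$ realizing the non-positivity, multiply elements of $V_i$ by cutoffs $\chi^{out}_i$ (supported in $\bar B_1 \setminus B_{\lambda_i/\delta}$, $\equiv 1$ outside $B_\delta$) and $\chi^{bub}_i$ (supported in $B_\delta$, $\equiv 1$ in $B_{\lambda_i/\delta}$), arranging $\chi^{out}_i+\chi^{bub}_i\equiv 1$ outside a cutoff-gradient region inside the neck. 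The Hessian is quasi-additive under this cutting up to the neck error of Step 1, so the dimension of the near-null space is at most (dimension on the outer region) $+$ (dimension on the bubble region). \textbf{Step 3 (Convergence of the counts).} On the outer region $u_i \to u_\infty$ smoothly, so the number of non-positive eigenvalues (with a fixed small spectral gap) of $L_{u_i}$ there is, for $i$ large, at most $\mbox{NI}(u_\infty)$, by upper semicontinuity of eigenvalue counts under smooth convergence on a fixed domain (using that the Dirichlet boundary on $\partial B_\delta$ only increases eigenvalues). On the rescaled bubble region $B_\delta/\lambda_i \uparrow \Real^2$ and $u_i(\lambda_i\cdot) \to \omega$ smoothly; conformal invariance of the Dirichlet energy in dimension $2$ makes the second variation form scale-invariant, so the count there is at most $\mbox{NI}(\omega)$ for $i$ large, again by a domain-exhaustion + upper-semicontinuity argument, where one must also check that no eigenvalue escapes to the spatial infinity of $\Real^2$ (handled using the removable-singularity smoothness of $\tilde\omega$ at the origin, i.e. the decay of $|\nabla\omega|$ at infinity). \textbf{Step 4 (Nullity version).} Repeat with Jacobi fields: a sequence of $L^2$-normalized Jacobi fields $\phi_i$ of $u_i$, after the same cutting, converges on the outer region to a Jacobi field of $u_\infty$ and on the bubble region to a Jacobi field of $\omega$; linear independence is preserved in the limit because the neck part has vanishing energy, so no non-trivial linear combination can concentrate there. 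This yields \eqref{eqn:n}.

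The main obstacle is \textbf{Step 1}, the neck positivity estimate with a uniform (length-independent) constant. A naive Poincaré inequality on $[T_1,T_2]\times S^1$ degenerates as $T_2-T_1\to\infty$, so one cannot simply dominate $\int R(u_i)(W,W)$ by $\int|\nabla W|^2$ uniformly; the resolution is to exploit the \emph{pointwise} decay $|\nabla u_i|\le C(\delta)\eta^\alpha$ from the Hölder estimate preceding Theorem \ref{thm:main1}, which makes the curvature potential small precisely where the Poincaré constant is weak (near the center of the neck), combined with a weighted (Hardy-type) inequality $\int \eta^{2\alpha}|W|^2 \le C\int |\nabla W|^2$ that \emph{does} hold with a $\delta$-dependent but length-independent constant on the neck. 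A secondary subtlety is the bookkeeping in Step 3 for the bubble: one must make sure that the finitely many non-positive directions of $\omega$ on $\Real^2$ are captured on $B_{R}$ for $R$ fixed (independent of $i$), which requires the bottom of the essential spectrum of $L_\omega$ on $\Real^2$ to be positive — true because $|\nabla\omega|^2$ is integrable and decays, so $L_\omega$ is a compact perturbation of $-\Delta$ near infinity. Once these two analytic points are in place, Steps 2–4 are routine linear-algebra-plus-compactness arguments.
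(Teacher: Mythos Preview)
Your cutoff-and-localize strategy is different from the paper's, and as written has a gap in Step~2. The paper never proves neck positivity or a Hardy inequality and never cuts off test sections. Instead it constructs, for each $i$, a conformal metric $g_i$ on $M$ equal to $g$ away from $p$, equal to the rescaled catenoid metric $\eta^2(dt^2+d\theta^2)$ on the neck, and equal to a fixed $S^2$-type metric on the bubble core. The role of Theorem~\ref{thm:main1} is that $|\partial_t u_i|+|\partial_\theta u_i|\le C\eta$ with exponent exactly~$1$, so $|\nabla u_i|_{g_i}\le C$ uniformly on the neck; the older $\eta^\alpha$ bound you invoke would only give $|\nabla u_i|_{g_i}\le C\eta^{\alpha-1}$, which blows up at the neck center. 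From the uniform gradient bound one gets a uniform lower bound on the non-positive eigenvalues $\beta_{i,k}$ of $J_{u_i}$ (computed with respect to $g_i$) and, via the mean-value inequality on the catenoid, a uniform $L^\infty$ bound on the $L^2(g_i)$-orthonormal eigenfunctions $v_{i,k}$. Since the $g_i$-volume of the neck is $O(\delta^2)$, no $L^2$ mass is lost there; the limits $v_k$ (on the body) and $\tilde v_k$ (on the bubble) then satisfy $\mathcal M_1+\mathcal M_2=\mathrm{Id}$ for their Gram matrices, and $l\le \mathrm{rank}\,\mathcal M_1+\mathrm{rank}\,\mathcal M_2\le \mbox{NI}(u_\infty)+\mbox{NI}(\omega)$ by linear algebra.

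The concrete gap in your plan: IMS localization only yields the \emph{sum} inequality $\mathcal H_{u_i}(\chi^{\mathrm{out}}W)+\mathcal H_{u_i}(\chi^{\mathrm{bub}}W)\le \mathcal H_{u_i}(W)+\int|\nabla\chi|^2|W|^2$; it does not force either summand to be $\le 0$, so $\chi^{\mathrm{out}}V_i$ need not lie in any non-positive cone for $u_i$ (or $u_\infty$), and the dimension split in Step~2 does not follow as stated. To repair this you would need a uniform bound on $\int_{\text{neck}}|W|^2$ for normalized $W\in V_i$, but you never fix a normalizing inner product---and this is exactly where the choice of metric matters: with the flat metric the bubble region has vanishing area, with the cylinder metric the neck has unbounded volume, and only a $g_i$-type interpolation keeps both ends finite simultaneously. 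The ``neck positivity'' of Step~1 is also not quite the right ingredient: what is actually needed is that eigenfunctions carry no $L^2$ mass on the neck in the limit, which the paper obtains from the $L^\infty$ bound plus the vanishing $g_i$-volume, not from a Hardy inequality. If you want to push your route through, you must either supply the missing no-concentration estimate on the neck (this is what the catenoid metric together with Theorem~\ref{thm:main1} delivers) or replace the naive splitting by a Gram-matrix rank argument as the paper does.
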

Here we regard $\omega$ as a harmonic map from $S^2$ to $N$.

One should not expect equality in \eqref{eqn:ni} and \eqref{eqn:n} in the general case. Even in the smooth limit case (no bubbling), we can find a sequence of harmonic maps with $\mbox{Nul}(u_i)=0$, but the smooth limit map $u$ has $\mbox{Nul}(u)>0$. 

\begin{rem}
While it is trivial to generalize the above statement to the multi-bubble case, we will have to include a contribution of $\mbox{NI}(\omega)$ even for a ghost bubble.	The nullity of a constant map is the same with $\dim N$, which weakens the theorem. We shall address this issue in a forthcoming paper.
\end{rem}

The paper is organized as follows. Section \ref{sec:pre} contains some preliminary facts which we include for the convenience of the readers. In Section \ref{sec:key}, we prove an important lemma which we use in Section \ref{sec:proof} to prove Theorem \ref{thm:main1}. Section \ref{sec:index} is devoted to the proof of Theorem \ref{thm:main3}.

\begin{acknowledgement}
	The author would like to thank Professor Li Yuxiang for his comments. The research work is supported by NSFC 11471300.
\end{acknowledgement}

\section{Preliminaries} \label{sec:pre}
In this section, we collect a few known results and definitions.
\subsection{Known neck analysis}
Let $u_i$ be a sequence of harmonic maps satisfying (i)-(iii) in the introduction. The neck domain is denoted by $\Omega_i:=B_\delta\setminus B_{\lambda_i/\delta}$. We have defined in the introduction 
\begin{equation*}
	\eta= \abs{x}+\frac{\lambda_i}{\abs{x}}.
\end{equation*}
Notice that we have omitted the subscript $i$ for $\eta$. In the cylinder coordinates $(t,\theta)$, where $t=\log r$, the neck domain becomes
\begin{equation*}
	N_i:= [\log \lambda_i/\delta, \log \delta ]\times S^1
\end{equation*}
and the definition of $\eta$ reads
\begin{equation*}
	\eta(t,\theta)= e^t + \lambda_i e^{-t}.
\end{equation*}

The proof of no neck theorem (see \cite{LW1998} and \cite{QT1997}) gives a small $\delta>0$ and $\alpha>0$ such that
\begin{equation}\label{eqn:pre1}
	\abs{\partial_t u_i}, \abs{\partial_\theta u_i} \leq C \eta^\alpha
\end{equation}
for $(t,\theta)\in N_i$.
Combining the above with the $\varepsilon$-regularity theorem, we obtain
\begin{equation}\label{eqn:pre2}
	\abs{\partial_t^{k_1}\partial_\theta^{k_2} u_i}\leq C(k_1,k_2) \eta^\alpha
\end{equation}
on $N_i$ for any $k_1,k_2\in \mathbb N \cup\set{0}$ and $k_1+k_2>0$.

\begin{defn}\label{defn:O}
	A function $w$ on $N_i$ is said to be an $O(\eta^\alpha)$ if for any $k_1,k_2\in \mathbb N\cup \set{0}$, 
	\begin{equation*}
		\abs{\partial_t^{k_1} \partial_{\theta}^{k_2}w} \leq C(k_1,k_2) \eta^\alpha \qquad \text{on}\quad N_i.
	\end{equation*}
\end{defn}

Due to \eqref{eqn:pre1}, by setting $p_i=u(\frac{1}{2}\log \lambda_i, 0)$, we have
\begin{equation}\label{eqn:start}
	u_i=p_i+ O(\eta^\alpha).
\end{equation}

\subsection{Second variation of harmonic maps}\label{sub:equation}
Suppose that $(M,g)$ and $(N,h)$ are two closed Riemannian manifold and $u:M\to N$ is a harmonic map. A section $v$ of the pullback bundle $u^*TN$ is said to be a vector field along $u$. If $v_1$ and $v_2$ are two vector fields along $u$ and $\phi(s,t)$ is a 2-parameter variation of $u$ satisfying
\begin{equation*}
	v_1=\pfrac{\phi}{s}|_{(s,t)=(0,0)}, \qquad v_2=\pfrac{\phi}{t}|_{(s,t)=(0,0)},
\end{equation*}
then the $E$-index form, or the Hessian of $u$ is defined by
\begin{equation*}
	\mathcal H_u(v_1,v_2)= \frac{\partial^2 E(\phi)}{\partial s \partial t}|_{(s,t)=(0,0)}. 
\end{equation*}
When $\dim M=2$, the energy $E$ is conformal invariant and so is $\mathcal H_u$. Computation shows that
\begin{equation*}
	\mathcal H_u(v_1,v_2)=\int_M \langle J_u(v_1), v_2\rangle dV_g,
\end{equation*}
where
\begin{equation}\label{eqn:J}
	-J_u(v)=\mbox{Tr}_g \nabla^2_u v + \mbox{Tr}_g R(du,v)du.
\end{equation}
Here $R$ is the Riemann curvature tensor of $N$; $\nabla_u$ is the induced connection of the bundle $u^*TN$ and $\mbox{Tr}_g$ is an operator that takes the trace with respect to $g$. The operator $J_u$ is elliptic and self-adjoint so that we can talk about the eigenvalues and eigenfunctions. The index is defined to be the number of negative eigenvalues (counting multiplicity) and the nullity is defined to be the dimension of $\mbox{Ker} J_u$.

For the proof of Theorem \ref{thm:main3}, we shall study the elliptic equation
\begin{equation}\label{eqn:jacobi}
	J_u(v)=\beta v.
\end{equation}
We will derive apriori estimates of $v$ from the above equation, in which the metric $g$ and the map $u$ are considered as known coefficients. In order to be precise, we write down \eqref{eqn:jacobi} in local coordinates.

Recall that the manifold $N$ is embedded in $\Real^p$. Hence, $u$ is a $\Real^p$-valued function, whose components are denoted by $u^\mu$ for $\mu=1,\cdots,p$. We also take local isothermal coordinates $(x^1,x^2)$ in a neighborhood of $M$ so that the metric $g$ is $g_{ab}dx^a dx^b$ where $a,b=1,2$. $v$ as a vector field along $u$ is regarded as a map into $\Real^p$ as well, satisfying the extra restriction
\begin{equation*}
	v(x^1,x^2)\in T_{u}N.
\end{equation*}
Denote the projection map from $\Real^p$ to $T_y N$ by $\Pi(y)$. $\Pi(y)$ is a $p\times 2$ matrix smoothly depending on $y\in N$. We smoothly extend its definition to a neighborhood of $N$ in $\Real^p$ so that it makes sense to say $\partial_\mu \Pi$. It is not hard to see that the following computation is independent of this choice of extension. With the help of $\Pi$, we have
\begin{equation*}
	\nabla_{u;a} v = \Pi(u) (\partial_a v),
\end{equation*}
which we use to compute
\begin{equation}\label{eqn:ju}
	\begin{split}
	J_u (v)&= - g^{ab}\nabla_{u;a} \nabla_{u;b} v - g^{ab}R^N(\partial_a u,v)\partial_b u \\
	&=  - g^{ab}\Pi(u) \partial_a \left( \Pi(u) \partial_b v \right) - g^{ab} R^N(\partial_a u,v)\partial_b u \\ 
	&= -g^{ab}\Pi(u) \partial_{ab}^2 v - g^{ab}\Pi(u) [\partial_\mu \Pi(u) \partial_a u^\mu ] \partial_b v - g^{ab} R^N(\partial_a u,v)\partial_b u. 
	\end{split}
\end{equation}
To see that this is an elliptic linear operator of $v$, we use the fact that $v\in T_u N$, or equivalently,
\begin{equation}\label{eqn:beingnormal}
	\Pi(u) v =v
\end{equation}
for all $(x^1,x^2)$.

We take two derivatives of \eqref{eqn:beingnormal} to get
\begin{equation*}
	(\partial_\mu \Pi)(u) (\partial_a u^\mu) v  + \Pi(u) \partial_a v = \partial_a u
\end{equation*}
and
\begin{equation}
	\begin{split}
	&(\partial^2_{\mu\nu}\Pi(u) \partial_a u^\mu \partial_b u^\nu )v + (\partial_\mu \Pi)(u) (\partial^2_{ab} u^\mu) v   \\
	+&(\partial_\mu \Pi)(u) \partial_a u^\mu \partial_b v + (\partial_\nu \Pi)(u) \partial_b u^\nu \partial_a v + \Pi(u) \partial^2_{ab} v \\
	=& \partial^2_{ab} u.
	\end{split}
	\label{eqn:pi}
\end{equation}
Plugging \eqref{eqn:pi} into \eqref{eqn:ju}, we obtain
\begin{equation}
	\begin{split}
		J_u(v) =& -g^{ab} \partial^2_{ab} v \\
		& - g^{ab}\Pi(u) [\partial_\mu \Pi(u) \partial_a u^\mu ] \partial_b v  +  2 g^{ab}(\partial_\mu \Pi)(u) \partial_a u^\mu \partial_b v  \\
		& + g^{ab}(\partial_\mu \Pi)(u) (\partial^2_{ab} u^\mu) v - g^{ab} R^N(\partial_a u,v)\partial_b u + g^{ab}(\partial^2_{\mu\nu}\Pi(u) \partial_a u^\mu \partial_b u^\nu )v.
	\end{split}
	\label{eqn:Ju}
\end{equation}

We end this section by an observation. Since $N$ is compact, $\Pi$, $R$ and all of their derivatives are bounded. If we work in a coordinate neighborhood $\set{\abs{x}\leq \delta}$ satisfying \\
(1) $\frac{1}{\Lambda}\delta_{ab}\leq g_{ab}\leq \Lambda \delta_{ab}$; \\
 (2)$\abs{\beta}$ and all partial derivatives of $g_{ab}$ and $u$ are bounded by $\Lambda$, \\
then the elliptic estimate gives
\begin{equation*}
	\norm{v}_{C^{1,\alpha}(\set{\abs{x}<\delta/2})}\leq C(\delta,\Lambda,N) \norm{v}_{L^2(\set{\abs{x}<\delta})}
\end{equation*}
for a solution $v$ to the equation
\begin{equation*}
	J_u(v)=\beta v.
\end{equation*}
\section{The key lemma on the cylinder}\label{sec:key}
In this section, we give a proof of the following lemma, which is about the Poisson equation on long cylinders. It plays a key role in the proof of Theorem \ref{thm:main1}.

Recall that $N_i$ and $\eta$ are defined by
\begin{equation*}
	N_i:= [\log \lambda_i/\delta, \log \delta ]\times S^1
\end{equation*}
and
\begin{equation*}
	\eta(t,\theta)= e^{t} + \lambda_i e^{-t}.
\end{equation*}
Denote the Laplace operator on cylinders by $\tilde{\triangle}$.

\begin{lem}\label{lem:key}
	Suppose that $f$ is defined on $N_i$ with $\abs{f}\leq C_1 \eta^\alpha$ for some $0<\alpha\notin \mathbb N$. Then we can find a solution $\tilde{\triangle} v =f$ such that
	\begin{equation*}
		\abs{v}\leq C_2 \eta^{\alpha} \qquad \text{on} \quad N_i.
	\end{equation*}
	Here $C_2$ depends on $C_1$, but not on $i$.
\end{lem}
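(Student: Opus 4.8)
The plan is to solve the Poisson equation $\tilde{\triangle} v = f$ by working mode-by-mode in the Fourier expansion in $\theta$, since the cylinder $N_i = [T_i, T]\times S^1$ (with $T_i = \log(\lambda_i/\delta)$, $T = \log\delta$) has a product structure in which the Laplacian is $\partial_t^2 + \partial_\theta^2$. Write $f(t,\theta) = \sum_{k\in\mathbb Z} f_k(t) e^{ik\theta}$ and correspondingly look for $v(t,\theta) = \sum_k v_k(t) e^{ik\theta}$, so that $v_k'' - k^2 v_k = f_k$ for each $k$. The point is to choose, for each $k$, a particular solution of this ODE on the interval $[T_i,T]$ together with boundary data chosen so that the resulting $v_k$ is controlled by the same weight that controls $f_k$, namely $\eta^\alpha = (e^t + \lambda_i e^{-t})^\alpha$. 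Note that $\eta^\alpha$ is comparable to $e^{\alpha t} + \lambda_i^\alpha e^{-\alpha t}$, which near the two ends behaves like the two extreme "barrier" exponentials; this is what makes the construction delicate but also what makes it possible.

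The key step is the construction of a good supersolution / barrier and the right choice of boundary conditions. For the nonzero modes $k\neq 1$, the homogeneous solutions $e^{\pm kt}$ decay or grow faster than $e^{\pm t}$, and one can take the bounded-kernel (Green's function) solution $v_k(t) = -\frac{1}{2k}\int_{T_i}^{T} e^{-k|t-s|} f_k(s)\,ds$ with zero boundary values; convolving the exponential kernel against a function bounded by $C\eta^\alpha$ produces something bounded by $C'\eta^\alpha$ with a constant uniform in the length of the cylinder, precisely because $\alpha \neq k$ for $k\ge 2$ (this is where $\alpha\notin\mathbb N$, or at least $\alpha<1$ so $\alpha\neq k$ for all integers $k\ge1$, is used to avoid resonance). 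The resonant case is $k=\pm 1$, where $e^{\pm t}$ appears both as a homogeneous solution and as (roughly) the size of the forcing; there one writes the particular solution by variation of parameters, $v_1 = \frac{1}{2}e^{t}\int \cdots - \frac{1}{2}e^{-t}\int\cdots$, and must check that the secular terms that arise are still dominated by $\eta^{1}$ times a bounded amount — here one uses that the forcing is actually $O(\eta^\alpha)$ with $\alpha<1$ strictly, so $\int e^{-s}\cdot\eta^\alpha\,ds$ and $\int e^{s}\cdot\eta^\alpha(e^{-2s}?)\,ds$ converge at the appropriate end and do not generate a $t e^{\pm t}$ term of the wrong size. The $k=0$ mode, $v_0'' = f_0$, is integrated twice, and the constants of integration are fixed to match the $\eta^\alpha$ bound (again using $\alpha\neq 0$ so $\int\int \eta^\alpha$ is bounded by $C\eta^\alpha$).

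To package all the modes together uniformly, I would not literally sum the Fourier series with mode-dependent constants, but instead prove a single maximum-principle statement: construct an explicit function $\Phi = A\eta^\alpha$ with $\tilde{\triangle}\Phi \le -|f|$ on $N_i$ (possible since a direct computation gives $\tilde{\triangle}\eta^\alpha \sim \alpha(\alpha-1)$ times something comparable to $\eta^\alpha$ when $\alpha\neq 1$, hence of a definite sign after adjusting, or at worst $\tilde\triangle \eta^\alpha \ge -C\eta^{\alpha}$ which can be absorbed), solve $\tilde\triangle v = f$ with some explicit boundary values bounded by $A\eta^\alpha$ on $\partial N_i$ (these boundary values can be taken from the endpoint-adapted solutions above), and then apply the maximum principle to $v \mp \Phi$ to conclude $|v|\le A\eta^\alpha$ on all of $N_i$. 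The real subtlety, and the step I expect to be the main obstacle, is handling the resonant mode $k=1$ (and $k=-1$) cleanly — ensuring the variation-of-parameters solution there genuinely stays $O(\eta^\alpha)$ rather than picking up a logarithmic/secular factor — and making sure every constant in the argument is independent of $i$ (equivalently, independent of the length $T - T_i \to \infty$ of the cylinder), which is the whole point of the lemma and forces one to use the two-ended weight $\eta$ rather than a one-sided exponential.
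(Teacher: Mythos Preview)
Your mode-by-mode Fourier approach is viable and genuinely different from the paper's proof, but your execution has two concrete errors. First, the barrier idea fails outright: a direct computation gives $\tilde\triangle\,\eta^\alpha=\alpha^2\eta^\alpha-4\lambda_i\alpha(\alpha-1)\eta^{\alpha-2}$, whose leading term is $\alpha^2\eta^\alpha>0$, not $\alpha(\alpha-1)\eta^\alpha$. Thus $A\eta^\alpha$ is a \emph{subsolution}, you cannot arrange $\tilde\triangle\Phi\le -|f|$, and the maximum principle goes the wrong way. Second, you have misplaced the ``resonance''. The decaying kernel $-\frac{1}{2k}\int e^{-k|t-s|}f_k\,ds$ yields $|v_k|\le C\eta^\alpha$ precisely when $k>\alpha$, not when $k\ne 1$: if $\alpha\in(0,1)$ then $k=1$ is already in the good regime and needs no special care, while if $\alpha>1$ then \emph{every} mode with $0\le k<\alpha$ (not just $k=1$) needs a different particular solution, namely variation of parameters with both integrals started from the \emph{center} $t_0=\tfrac12\log\lambda_i$ of the neck. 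The hypothesis $\alpha\notin\mathbb N$ is exactly what rules out $k=\alpha$, so no mode is truly resonant and no secular $t\cdot e^{\pm t}$ term ever appears. With these corrections---drop the barrier entirely and split the modes at $\lfloor\alpha\rfloor$---the Fourier argument does go through with constants independent of $i$.

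For comparison, the paper decomposes in the other variable: it slices the cylinder into unit segments $\mathcal C_j$ in $t$, solves $\tilde\triangle v_j=f\chi_{\mathcal C_j}$ via the Newtonian potential on $\mathbb R^2$, and then---since $v_j$ is harmonic outside $\mathcal C_j$---subtracts the first $\lfloor\alpha\rfloor$ Fourier-in-$\theta$ modes (its Lemmas on harmonic functions on cylinders) so that the corrected piece decays like $e^{-(\lfloor\alpha\rfloor+1)\,\mathrm{dist}(t,\mathcal C_j)}$; these then sum to $O(\eta^\alpha)$. Your route, once repaired, is arguably more direct; the paper's $t$-slicing avoids summability-in-$k$ bookkeeping but requires the auxiliary harmonic-function estimates.
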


For a clear presentation, it is easier to prove the lemma on a cylinder centered at $t=0$, namely, $[-L,L]\times S^1$. More precisely, we prove
\begin{lem}\label{lem:centerkey}
	Suppose that $f$ is defined on $[-L,L]\times S^1$ with $\abs{f}\leq C_1 \tilde{\eta}^\alpha$ for some $0<\alpha\notin \mathbb N$, where $\tilde{\eta}(s,\theta)=e^s+e^{-s}$. Then we can find a solution $\tilde{\triangle} v =f$ such that
	\begin{equation*}
		\abs{v}\leq C_2 \tilde{\eta}^{\alpha} \qquad \text{on} \quad [-L,L]\times S^1.
	\end{equation*}
	Here $C_2$ depends on $C_1$, but not on $i$.
\end{lem}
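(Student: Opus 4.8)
The plan is to solve the Poisson equation by separation of variables in the angular variable, so that the problem reduces to a family of one-dimensional ODEs on $[-L,L]$, one for each Fourier mode $n\in\mathbb Z$. Write $f(s,\theta)=\sum_n f_n(s)e^{in\theta}$ and seek $v(s,\theta)=\sum_n v_n(s)e^{in\theta}$, so that $v_n''-n^2 v_n=f_n$ on $[-L,L]$. For each fixed $n$ I would construct a particular solution $v_n$ by an explicit Green's-function/variation-of-parameters formula and then estimate it against the weight $\tilde\eta^\alpha=(e^s+e^{-s})^\alpha$. Since $\abs{f}\le C_1\tilde\eta^\alpha$ uniformly, the Fourier coefficients satisfy $\abs{f_n(s)}\le C_1\tilde\eta(s)^\alpha$ for every $n$ (with a gain in $n$ if one also uses the derivative bounds hidden in the $O(\eta^\alpha)$ notation, though here only the sup bound on $f$ is assumed, so I should be careful to only extract what is given). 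The key point, emphasized in the text, is that the constant $C_2$ must not depend on $L$.

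The heart of the matter is the mode-by-mode estimate. For $n=0$ one integrates twice; here $\tilde\eta^\alpha$ is not integrable to a bounded function on an arbitrarily long interval unless one is careful, so I would choose the two constants of integration to center the antiderivatives at $s=0$ and use that $\int_0^s\int_0^{s'}\tilde\eta(\sigma)^\alpha\,d\sigma\,ds'\lesssim \tilde\eta(s)^\alpha$ for $\alpha>0$, $\alpha\notin\mathbb N$ — this is where the hypothesis $\alpha\notin\mathbb N$ enters, since $\tilde\eta(s)^\alpha\sim e^{\alpha\abs s}$ and double integration produces $e^{\alpha\abs s}/\alpha^2$ plus lower-order terms, with no resonance as long as $\alpha\ne 0$; the non-integer condition is really needed for the higher modes. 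For $\abs n\ge 1$ the homogeneous solutions are $e^{\pm ns}$, and the variation-of-parameters formula gives
\begin{equation*}
	v_n(s)=-\frac{1}{2n}\int_{-L}^{L} e^{-n\abs{s-s'}}f_n(s')\,ds' + (\text{homogeneous correction}),
\end{equation*}
and one checks $\int_{-L}^L e^{-\abs n\,\abs{s-s'}}\tilde\eta(s')^\alpha\,ds'\lesssim \frac{1}{\abs n-\alpha}\,\tilde\eta(s)^\alpha$ whenever $\abs n>\alpha$ — convolution of an exponential kernel of rate $\abs n$ with a function growing at rate $\alpha<\abs n$ — and this bound is uniform in $L$. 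The non-integer hypothesis guarantees $\abs n\ne\alpha$ for all $n$, so the denominator never degenerates; summing $\sum_n \frac{1}{\abs n-\alpha}$ converges and produces the final $C_2$ independent of $L$. Finally, I must also arrange the boundary behavior: the homogeneous corrections should be chosen (e.g. to make $v$ and its normal derivative vanish at $s=\pm L$, or simply small there) so that the full series $v=\sum_n v_n e^{in\theta}$ converges and solves $\tilde\triangle v=f$; elliptic regularity upgrades this to a genuine solution.

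The main obstacle I anticipate is the $n=0$ mode combined with the uniform-in-$L$ requirement: double integration of $e^{\alpha\abs s}$ is fine, but one must verify that no secular (polynomially growing, i.e. $s\cdot e^{\alpha s}$ or $s^2$) term is forced by the boundary conditions — this is exactly why one centers the integration at the midpoint $s=0$ rather than at an endpoint, and why the statement is phrased on the symmetric cylinder $[-L,L]\times S^1$. A secondary technical point is making the mode sum rigorous when $f$ is only assumed to satisfy a pointwise bound (not a Sobolev bound): I would first mollify or note that $f$ arising in the application is smooth with all $\theta$-derivatives controlled (the $O(\eta^\alpha)$ convention of Definition \ref{defn:O}), so that $\abs{f_n(s)}\le C_k\abs n^{-k}\tilde\eta(s)^\alpha$ for every $k$, giving absolute convergence of the series for $v$ and all its derivatives; the clean pointwise statement then follows. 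Deducing Lemma \ref{lem:key} from Lemma \ref{lem:centerkey} is just the change of variables $s=t-\tfrac12\log\lambda_i$, $L=\tfrac12\log(1/\lambda_i)-\log\delta$, under which $\eta(t,\theta)=\sqrt{\lambda_i}\,\tilde\eta(s,\theta)$, so the weights match up to a constant factor that is absorbed.
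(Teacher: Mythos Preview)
Your Fourier-in-$\theta$ approach is genuinely different from the paper's. The paper does not separate variables; it chops the cylinder into unit pieces $\mathcal C_j=[j-1,j]\times S^1$, solves $\tilde\triangle v_j=f\chi_{\mathcal C_j}$ on the infinite cylinder via the Newtonian potential on $\mathbb R^2$, and then subtracts from each $v_j$ its low-mode harmonic part $P_k$ (with $k=\lfloor\alpha\rfloor$) on the region where $v_j$ is harmonic. Lemma~\ref{lem:kplusone} gives $|v_j-P_k|\le Ce^{\alpha|j|}e^{-(k+1)|j|}e^{(k+1)|s|}$ for $|s|\le|j|-1$, and Lemma~\ref{lem:harmonic1} controls the remainder elsewhere; a direct summation over $j$ then yields $\bigl|\sum_j(v_j-P_k)\bigr|\le C\tilde\eta^\alpha$. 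This route uses only the pointwise hypothesis $|f|\le C_1\tilde\eta^\alpha$, whereas your route needs $\theta$-regularity of $f$ to sum the modes. In the application that regularity is available (Definition~\ref{defn:O}), so your method would work there, but it proves a slightly weaker lemma than the one stated.

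Two concrete gaps remain. First, you handle $n=0$ and $|n|>\alpha$ but not the modes $1\le|n|\le\lfloor\alpha\rfloor$, which are present once $\alpha>1$ (as after one bootstrap in Section~\ref{sub:main1}). For these your decaying kernel $\tfrac{1}{2|n|}\int_{-L}^L e^{-|n|\,|s-s'|}f_n(s')\,ds'$ produces a term of size $e^{|n|s+(\alpha-|n|)L}$, which is not $O(\tilde\eta^\alpha)$ uniformly in $L$; the remedy is to integrate from the center as you did for $n=0$, i.e.\ take $v_n(s)=\tfrac{1}{2|n|}\bigl(e^{|n|s}\int_0^s e^{-|n|s'}f_n\,ds'-e^{-|n|s}\int_0^s e^{|n|s'}f_n\,ds'\bigr)$, which gives $|v_n|\le C\tilde\eta^\alpha/|\alpha^2-n^2|$. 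Second, your claim that $\sum_{|n|>\alpha}\tfrac{1}{|n|-\alpha}$ converges is false. Under the bare pointwise hypothesis on $f$ you get only $\sum_n|v_n(s)|^2\le C\tilde\eta(s)^{2\alpha}$, and must then invoke interior elliptic regularity for $\tilde\triangle v=f$ on each $[s-1,s+1]\times S^1$ to upgrade this $L^2(S^1)$ bound to the pointwise one. Your fallback to the rapid decay of $f_n$ coming from Definition~\ref{defn:O} is legitimate for the application, but you should be explicit that this is an additional hypothesis beyond what Lemma~\ref{lem:centerkey} assumes.
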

To see these two lemmas are equivalent, we consider a translation $s= t- \frac{1}{2}\log \lambda_i$ and notice that the statement of Lemma \ref{lem:key} remains unchanged with $L=\log\delta - \frac{1}{2}\log \lambda_i$, except that $\eta$ now becomes
\begin{equation*}
	\eta(s,\theta)= \sqrt{\lambda_i} \left( e^s + e^{-s}\right).
\end{equation*}
It is only different from $\tilde{\eta}$ by a constant and we can always multiply both $v$ and $f$ by a constant without changing anything.

\vskip 1cm
Assume without loss of generality that $L$ is an integer. We write
\begin{equation*}
	\mathcal C_i= [i-1,i]\times S^1
\end{equation*}
for $i=-L+1,\cdots,L$.

The following subsections are devoted to the proof of Lemma \ref{lem:centerkey}. In Section \ref{sub:poisson}, we solve the Poisson equation with the right hand side $f\chi_{\mathcal C_i}$, where $\chi_{\mathcal C_i}$ is the characteristic function of $\mathcal C_i$. In Section \ref{sub:harmonic}, we prove a few estimates on the harmonic functions, which we use in Section \ref{sub:proof} to modify the solution obtained in Section \ref{sub:poisson} and sum the modified solutions up to finish the proof.

\subsection{The Poisson equation on cylinder}\label{sub:poisson}

Let the infinite cylinder be denoted by $\mathcal C_\infty$. Suppose that $f$ is a smooth function supported in $\mathcal C_1$ and $\sup_{\mathcal C_\infty} \abs{f}\leq 1$. We are looking for a solution
\begin{equation}\label{eqn:poisson1}
	\tilde{\triangle} u =f
\end{equation}
on $\mathcal C_\infty$ with good estimate of $u$.

Setting $r= e^{s}$ and $x=r\cos \theta$, $y=r\sin \theta$, \eqref{eqn:poisson1} becomes 
\begin{equation*}
	\triangle u = r^{-2} f := \bar{f}.
\end{equation*}
Here $\triangle= \frac{\partial^2}{\partial x^2} + \frac{\partial^2}{\partial y^2}$ and $\bar{f}$ is supported in $B_e \setminus B_1$ and uniformly bounded by $1$. We then have a solution given by
\begin{equation*}
	u(x,y)= - \frac{1}{2\pi}\int_{\Real^2} \bar{f}(x_1,y_1) \log \sqrt{(x-x_1)^2 + (y-y_1)^2} dx_1dy_1.
\end{equation*}
Since $\bar{f}$ is compactly supported and uniformly bounded, it is not hard to show that
\begin{equation*}
	\sup_{\abs{x^2+y^2}\leq e } \abs{u}\leq C \qquad \sup_{\abs{x^2+y^2}\geq e} \abs{u}\leq C \log (x^2+y^2)
\end{equation*}
for some universal constant $C>0$.  
In terms of the cylinder coordinates $(s,\theta)$, we have
\begin{equation}
\label{eqn:cylinderpoisson}
\begin{split}
\abs{u(s,\theta)}&\leq C \qquad \text{for} \quad s\leq 1 \\
\abs{u(s,\theta)}&\leq Cs \qquad \text{for} \quad s> 1.
\end{split}
\end{equation}

For each $i=-L+1, \cdots,L$, let $\chi_{\mathcal C_i}$ be the characteristic function of $\mathcal C_i$. Set $f_i= f \chi_{\mathcal C_i}$ and let $v_i$ be the solution (given above) to the equation
\begin{equation}\label{eqn:vi}
	\tilde{\triangle} v_{i} = f_i.
\end{equation}
Since $\abs{f}\leq C \tilde{\eta}^\alpha$, we have
\begin{equation*}
	\sup_{\mathcal C_\infty} \abs{f_i} \leq C e^{\alpha \abs{i}}.
\end{equation*}
Notice that $f_i$ is supported in $\mathcal C_i$ instead of $\mathcal C_1$, hence after a translation in $s$-direction, \eqref{eqn:cylinderpoisson} implies
\begin{equation}
\label{eqn:solutionvi}
\begin{split}
	\abs{v_i}&\leq Ce^{\alpha\abs{i}} \qquad \text{for} \quad \abs{s}\leq \abs{i} \\
	\abs{v_i}&\leq Ce^{\alpha\abs{i}}(\abs{s}-\abs{i}+1) \qquad \text{for} \quad \abs{s}> \abs{i}. 
\end{split}
\end{equation}

\subsection{Harmonic function on cylinder}\label{sub:harmonic}
We are interested in a bounded harmonic function $u$ on a part of cylinder $[-M,M]\times S^1$. It is well known that we have an expansion
\begin{equation}\label{eqn:wellknown}
	u(s,\theta) = a_0 + b_0 s + \sum_{n=1}^\infty \left( a_n e^{ns}\cos n\theta + b_n e^{ns} \sin n\theta + c_n e^{-ns}\cos n\theta + d_n e^{-ns} \sin n\theta \right).
\end{equation}
The next lemma gives estimates on these coefficients in terms of the $C^0$ norm of $u$.
\begin{lem}\label{lem:harmonic1}
	Suppose that $M\geq 1$ and $u$ is a harmonic function on $[-M,M]\times S^1$. If $\norm{u}_{C^0([-M,M]\times S^1)}\leq \epsilon$, then
	\begin{equation*}
		\abs{a_0}<\epsilon, \qquad \abs{b_0} \leq 2\epsilon /M 
	\end{equation*}
	and
	\begin{equation*}
		\abs{a_n}, \abs{b_n}, \abs{c_n}, \abs{d_n} \leq 4 \epsilon e^{-nM}.
	\end{equation*}
\end{lem}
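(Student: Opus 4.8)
The statement to prove is Lemma~\ref{lem:harmonic1}: if $u$ is harmonic on $[-M,M]\times S^1$ with $\|u\|_{C^0}\le\epsilon$, then the Fourier coefficients in the expansion \eqref{eqn:wellknown} satisfy $|a_0|<\epsilon$, $|b_0|\le 2\epsilon/M$, and $|a_n|,|b_n|,|c_n|,|d_n|\le 4\epsilon e^{-nM}$.  The natural approach is to extract each coefficient by integrating $u(s,\cdot)$ against the appropriate trigonometric function on a fixed circle $\{s\}\times S^1$, and then play off two different values of $s$ (namely $s=M$ and $s=-M$, or $s=0$) to separate the exponentially growing part from the exponentially decaying part.

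\textbf{Step 1: isolate the angular modes.}  For $n\ge 1$, define $A_n(s)=\frac{1}{\pi}\int_{S^1}u(s,\theta)\cos n\theta\,d\theta$.  From \eqref{eqn:wellknown} and orthogonality of $\{\cos n\theta,\sin n\theta\}$ on $S^1$, $A_n(s)=a_ne^{ns}+c_ne^{-ns}$, and similarly the $\sin n\theta$ projection gives $b_ne^{ns}+d_ne^{-ns}$.  The $C^0$ bound on $u$ gives $|A_n(s)|\le\frac{1}{\pi}\int_{S^1}\epsilon\,d\theta=2\epsilon$ for every $s\in[-M,M]$; likewise for the $\sin$ projection.

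\textbf{Step 2: solve the $2\times 2$ system.}  Evaluating $A_n$ at $s=M$ and $s=-M$ gives
\[
a_ne^{nM}+c_ne^{-nM}=A_n(M),\qquad a_ne^{-nM}+c_ne^{nM}=A_n(-M),
\]
a linear system with determinant $e^{2nM}-e^{-2nM}$.  Solving, $a_n=\dfrac{e^{nM}A_n(M)-e^{-nM}A_n(-M)}{e^{2nM}-e^{-2nM}}$, and since $|A_n(\pm M)|\le 2\epsilon$ and $e^{2nM}-e^{-2nM}\ge\tfrac12 e^{2nM}$ for $nM\ge 1$ (here $M\ge 1$, $n\ge 1$), we get $|a_n|\le\dfrac{2\epsilon(e^{nM}+e^{-nM})}{\tfrac12 e^{2nM}}\le\dfrac{4\epsilon\cdot 2e^{nM}}{e^{2nM}}$; tightening the crude estimates yields the claimed $|a_n|\le 4\epsilon e^{-nM}$.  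The same computation bounds $c_n$ (and, from the $\sin$ projection, $b_n,d_n$).  One should just be slightly careful to verify the constant $4$ survives; replacing the bound $e^{nM}+e^{-nM}\le 2e^{nM}$ and $e^{2nM}-e^{-2nM}\ge e^{2nM}(1-e^{-2})\ge\tfrac{6}{7}e^{2nM}$ already gives a constant below $4$.

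\textbf{Step 3: the zero mode.}  Let $A_0(s)=\frac{1}{2\pi}\int_{S^1}u(s,\theta)\,d\theta=a_0+b_0s$, an affine function of $s$ with $|A_0(s)|\le\epsilon$ on $[-M,M]$.  Then $|a_0|=|A_0(0)|\le\epsilon$ (strictly, if the sup is not attained — but $<\epsilon$ follows once one notes $|A_0(0)|\le\|u\|_{C^0}\le\epsilon$; to get the strict inequality one can absorb it or simply state $\le\epsilon$, matching what is actually needed downstream), and $|b_0|=\frac{|A_0(M)-A_0(-M)|}{2M}\le\frac{2\epsilon}{2M}=\frac{\epsilon}{M}$, which is even better than the stated $2\epsilon/M$.

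\textbf{Main obstacle.}  There is no deep obstacle here — the lemma is elementary separation-of-variables bookkeeping.  The only points requiring care are (a) tracking the numerical constants so that the stated $4$, $2$ are genuinely valid for all $M\ge 1$ and all $n\ge 1$ (the worst case being $n=M=1$, which one should check directly), and (b) justifying the termwise integration of \eqref{eqn:wellknown}, which is immediate since the series converges uniformly on compact subsets of the open cylinder and one only ever integrates over circles $\{s\}\times S^1$ with $|s|<M'$ for $M'$ slightly larger, or one simply works with $A_n$ defined directly by integration and observes it is a bounded solution of $A_n''=n^2A_n$, hence of the form $a_ne^{ns}+c_ne^{-ns}$ by ODE uniqueness.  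This second viewpoint avoids any convergence subtlety altogether and is probably the cleanest way to write it up.
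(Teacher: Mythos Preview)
Your proposal is correct and follows essentially the same approach as the paper: project $u$ onto the $n$-th Fourier mode to obtain $A_n(s)=a_ne^{ns}+c_ne^{-ns}$ with $|A_n|\le 2\epsilon$, evaluate at $s=\pm M$, and solve the $2\times 2$ system to extract the coefficients, with the zero mode handled by the affine function $A_0(s)=a_0+b_0 s$. Your observation that $|b_0|\le \epsilon/M$ (sharper than the stated $2\epsilon/M$) and your remark about the strict inequality for $a_0$ are both valid side comments; the paper's own proof likewise only establishes $|a_0|\le\epsilon$.
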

\begin{proof}
	Consider $w(s)= \frac{1}{2\pi}\int_{S^1} u(s,\theta)d\theta$. Then $w(s)=a_0+b_0 s$. By our assumption on the $C^0$ norm of $u$, we have
	\begin{equation*}
		\abs{a_0+b_0 s} \leq \epsilon
	\end{equation*}
	for any $s\in [-M,M]$, from which the estimates for $a_0$ and $b_0$ follow. Similarly, setting $w(s)= \frac{1}{\pi} \int_{S^1} u(s,\theta) \cos n\theta d\theta$, we have
	\begin{equation*}
		w(s)= a_n e^{ns} + c_n e^{-ns}.
	\end{equation*}
	Again, the assumption on the $C^0$ norm of $u$ implies that $\abs{w(s)}\leq 2\epsilon$ for $s\in [-M,M]$.
	In particular, we have
	\begin{equation*}
		\left\{
			\begin{array}[]{l}
				\abs{a_n e^{nM} + c_n e^{-nM}}\leq 2\epsilon\\
				\abs{a_n e^{-nM} + c_n e^{nM}}\leq 2\epsilon,
			\end{array}
		\right.
	\end{equation*}
	which implies that
	\begin{equation*}
		\left\{
			\begin{array}[]{l}
				\abs{a_n e^{2nM} + c_n}\leq 2\epsilon e^{nM}\\
				\abs{a_n e^{-2nM} + c_n}\leq 2\epsilon e^{-nM}.
			\end{array}
		\right.
	\end{equation*}
	Subtracting the two inequalities above, we have
	\begin{equation*}
		\abs{a_n} \leq \frac{2\epsilon \left( e^{nM} + e^{-nM} \right)}{e^{2nM}-e^{-2n M}}.
	\end{equation*}
	Our estimate for $a_n$ follows by noticing that $M\geq 1$ and $n\geq 1$. The other estimates are proved similarly.
\end{proof}

Given the expansion \eqref{eqn:wellknown} and any $k\geq 0$, we define 
\begin{equation*}
	P_0= a_0+b_0 s
\end{equation*}
for $k=0$ and
\begin{equation*}
	P_k = a_0 + b_0 s + \sum_{n=1}^k \left( a_n e^{ns}\cos n\theta + b_n e^{ns} \sin n\theta + c_n e^{-ns}\cos n\theta + d_n e^{-ns} \sin n\theta \right)
\end{equation*}
for $k>0$.
A corollary of Lemma \ref{lem:harmonic1} is that there is a constant $C(k)$ depending on $k$, such that 
\begin{equation}\label{eqn:pk}
	\abs{P_k}(s,\theta) \leq C(k)\epsilon 
\end{equation}
for $s\in [-M,M]$.

We are also interested in the remainder
\begin{equation*}
u-P_k = \sum_{n=k+1}^\infty \left( a_n e^{ns}\cos n\theta + b_n e^{ns} \sin n\theta + c_n e^{-ns}\cos n\theta + d_n e^{-ns} \sin n\theta \right).
\end{equation*}

\begin{lem}\label{lem:kplusone}
	Suppose that $M\geq 1$ and $u$ is a harmonic function on $[-M,M]\times S^1$ satisfying $\norm{u}_{C^0([-M,M]\times S^1)}\leq \epsilon$. There exists some constant $C$ depending only on $k$, not $M$, such that
\begin{equation*}
	\abs{u-P_k} \leq \frac{C\epsilon}{e^{(k+1)M}} e^{(k+1) \abs{s}} \qquad \text{for} \quad s\in [-M,M].
\end{equation*}
\end{lem}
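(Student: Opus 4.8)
The plan is to combine the coefficient estimates of Lemma \ref{lem:harmonic1} with the triangle inequality, paying attention to the regime near the two ends $\{|s|=M\}$, where a naive term-by-term bound degenerates. It is convenient to set $\tau=M-|s|\ge 0$, so that the asserted inequality reads $\abs{u-P_k}\le C\epsilon\, e^{-(k+1)\tau}$.

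First I would estimate the generic term of the series
\begin{equation*}
u-P_k=\sum_{n=k+1}^\infty\left(a_ne^{ns}\cos n\theta+b_ne^{ns}\sin n\theta+c_ne^{-ns}\cos n\theta+d_ne^{-ns}\sin n\theta\right).
\end{equation*}
By Lemma \ref{lem:harmonic1} we have $\abs{a_n},\abs{b_n},\abs{c_n},\abs{d_n}\le 4\epsilon e^{-nM}$ for $n\ge 1$, and since $e^{ns},e^{-ns}\le e^{n\abs{s}}$ while $\abs{\cos n\theta},\abs{\sin n\theta}\le 1$, the $n$-th term is bounded by $(\abs{a_n}+\abs{b_n}+\abs{c_n}+\abs{d_n})e^{n\abs{s}}\le 16\epsilon e^{-n(M-\abs{s})}=16\epsilon e^{-n\tau}$. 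I would then split into two cases. When $\tau\ge 1$, summing the geometric series gives
\begin{equation*}
\abs{u-P_k}\le 16\epsilon\sum_{n=k+1}^\infty e^{-n\tau}=\frac{16\epsilon\,e^{-(k+1)\tau}}{1-e^{-\tau}}\le\frac{16}{1-e^{-1}}\,\epsilon\,e^{-(k+1)\tau},
\end{equation*}
which is exactly the claimed bound. When $0\le\tau\le 1$, the prefactor $1/(1-e^{-\tau})$ blows up, so instead I would use the crude bound $\abs{u-P_k}\le\abs{u}+\abs{P_k}\le(1+C(k))\epsilon$, which follows from the hypothesis $\norm{u}_{C^0}\le\epsilon$ together with \eqref{eqn:pk}; since $e^{-(k+1)\tau}\ge e^{-(k+1)}$ on this range, this yields $\abs{u-P_k}\le(1+C(k))e^{k+1}\,\epsilon\,e^{-(k+1)\tau}$.

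Choosing $C=\max\{16/(1-e^{-1}),\,(1+C(k))e^{k+1}\}$, which depends only on $k$ and not on $M$, finishes the argument. There is no serious obstacle here; the only point to watch is precisely the boundary regime $\tau\le 1$, where the exponential decay cannot be read off from the tail of the series — but it is also not needed there, because the target estimate is merely $O(\epsilon)$ in that region and is delivered directly by the $C^0$ bounds on $u$ and on $P_k$.
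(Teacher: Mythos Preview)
Your proof is correct. The term-by-term bound $16\epsilon e^{-n\tau}$ follows directly from Lemma \ref{lem:harmonic1}, and the split into $\tau\ge 1$ (geometric series) versus $\tau\le 1$ (the crude triangle inequality using \eqref{eqn:pk}) cleanly handles the only obstruction, namely the divergence of the geometric prefactor near the ends.

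This is a genuinely different and more elementary route than the paper's. The paper first reduces from $u-P_k$ to $u-P_{2k}$ by estimating the intermediate modes one by one, then introduces the $L^2$ quantity $w(s)=\frac{1}{\pi}\int_{S^1}(u-P_{2k})^2\,d\theta$, derives the differential inequality $w''\ge 4(k+1)^2 w$, applies an ODE comparison to get $w(s)\le C\epsilon^2 e^{-2(k+1)(M-|s|)}$, and finally upgrades this to a pointwise bound via interior elliptic estimates. Your argument bypasses the ODE comparison and the elliptic step entirely, trading them for the summability of $\sum_{n>k}e^{-n\tau}$ together with the endpoint patch. The paper's approach has the minor conceptual advantage that the convexity argument treats the whole tail at once without summing coefficients, but since Lemma \ref{lem:harmonic1} is already in hand, your direct summation is shorter and avoids invoking elliptic regularity.
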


\begin{proof}
First, we claim that it suffices to prove	
\begin{equation}\label{eqn:p2k}
	\abs{u-P_{2k}} \leq \frac{C\epsilon}{e^{(k+1)M}} e^{(k+1) \abs{s}}.
\end{equation}
By Lemma \ref{lem:harmonic1}, for any $l=k+1,\cdots,2k$, we have
\begin{equation*}
	\abs{a_l}, \abs{b_l}, \abs{c_l}, \abs{d_l} \leq C\epsilon e^{-lM}.
\end{equation*}
Therefore, for $(s,\theta)\in [-M,M]\times S^1$,
\begin{equation}\label{eqn:forl}
	\abs{a_{l} e^{ls} \cos l \theta + b_le^{ls}\sin l\theta + c_l e^{-ls}\cos l\theta + d_l e^{-ls}\sin l\theta} \leq C\epsilon e^{-lM} e^{l\abs{s}}.
\end{equation}
By summing \eqref{eqn:forl} up for $l=k+1,\cdots,2k$ and noticing that $e^{\abs{s}-M}\leq 1$, we obtain 
\begin{equation*}
	\abs{P_{2k}-P_{k+1}} \leq C k \epsilon e^{-(k+1)M} e^{(k+1)\abs{s}},
\end{equation*}
from which our claim follows.

For the proof of \eqref{eqn:p2k}, we compute
\begin{equation*}
	w(s)=\frac{1}{\pi}\int_{S^1} (u-P_{2k})^2 d\theta = \sum_{n=2k+1}^\infty \left( a_n e^{ns} + c_n e^{-ns} \right)^2 + \left( b_n e^{ns} + d_n e^{-ns} \right)^2.
\end{equation*}
Direct computation shows 
\begin{equation}\label{eqn:convexw}
 w''(s)\geq 4(k+1)^2 w(s). 
\end{equation}
In fact, for each $n>2k$, we have
\begin{eqnarray*}
	\frac{d^2}{ds^2} \left( a_n e^{ns} + c_n e^{-n s} \right)^2 &=& 4n^2 \left(  a_n^2 e^{2ns} + b_n^2 e^{-2n s} \right) \\
	&\geq& 4(k+1)^2\left(  a_n^2 e^{2ns} + b_n^2 e^{-2n s} \right) \times 2 \\
	&\geq& 4(k+1)^2 \left( a_n e^{ns}+b_n e^{-ns} \right)^2.
\end{eqnarray*}

By comparing \eqref{eqn:convexw} with ODE $w''=4(k+1)^2 w$ and noticing the bound $\abs{w}\leq C\epsilon^2$ on $[-M,M]$ (by \eqref{eqn:pk}), we obtain
\begin{equation}\label{eqn:ws}
	w(s)\leq C\epsilon^2 e^{-2(k+1)M} e^{2(k+1)s}.
\end{equation}
Since $u-P_{2k}$ is harmonic, the desired estimate \eqref{eqn:p2k} follows from \eqref{eqn:ws} and the elliptic estimate.
\end{proof}

\subsection{The proof of the key lemma}\label{sub:proof}
Recall that in Section \ref{sub:poisson}, we have defined $v_i$ by solving the Poisson equation \eqref{eqn:vi} on cylinder. If we could sum up these $v_i$'s, we could obtain a solution to \eqref{eqn:poisson1}. However, it seems that the sum is never smaller than $\tilde{\eta}^\alpha$. The idea is to modify $v_i$ by subtracting a harmonic function.

For $i=-1,0,1$, we simply set $u_i=v_i$. In this case, \eqref{eqn:solutionvi} implies that
\begin{equation}\label{eqn:u01}
	\abs{u_i}\leq C(\abs{s}+1) \qquad \text{for} \quad s\in [-L,L].
\end{equation}

For $\abs{i}>1$, by its definition, the function $v_i$ is harmonic on the cylinder $[-\abs{i}+1,\abs{i}-1]\times S^1$ with a uniform bound $C e^{\alpha \abs{i}}$ (see \eqref{eqn:solutionvi}). 
Let $k$ be the integer determined by $k<\alpha<k+1$. Set $u_i= v_i -P_k$, which is estimated as follows.
\begin{itemize}
	\item 
For $\abs{s}\leq \abs{i}-1$, Lemma \ref{lem:kplusone} (with $M=\abs{i}-1\geq 1$) implies that 
\begin{equation}\label{eqn:good1}
	\abs{u_i}(s) \leq C(k)  e^{\alpha \abs{i}} e^{-(k+1)\abs{i}} e^{(k+1)s}.
\end{equation}
\item 
For $L\geq \abs{s}> \abs{i}-1$, the definition of $u_i$ and \eqref{eqn:solutionvi} imply that
\begin{equation}\label{eqn:good2}
	\abs{u_i}(s) \leq  \abs{P_k}(s) + C e^{\alpha \abs{i}} (\abs{s}-\abs{i}+1). 
\end{equation}
\end{itemize}

Now, set
\begin{equation*}
	v=\sum_{i=-L+1}^L u_i.
\end{equation*}
We claim that $v$ is the solution needed in Lemma \ref{lem:centerkey}. Since it is a finite sum, it is easy to see that $v$ solves the Poisson equation \eqref{eqn:poisson1}. It remains to check that $\abs{v}\leq C \tilde{\eta}^\alpha$.

The estimate for $P_k$ in \eqref{eqn:good2} depends on $k$. Hence, we discuss first the case $k\ge 1$.
When $k\geq 1$ and $\abs{s}\geq \abs{i}-1$, Lemma \ref{lem:harmonic1} gives 
\begin{eqnarray*}
	\abs{P_k}(s) &\leq& C e^{\alpha \abs{i}} (1+ \frac{\abs{s}}{\abs{i}}) +\sum_{n=1}^k C e^{\alpha\abs{i}} e^{-n \abs{i}} e^{n \abs{s}}.
\end{eqnarray*}
Using the fact that $e^{\abs{s}-\abs{i}}\geq e^{-1}$ and $(1+\abs{s}/\abs{i})\leq C e^{\abs{s}-\abs{i}}$, we have 
\begin{equation}
	\abs{P_k}(s) \leq C  e^{\alpha \abs{i}} e^{k (\abs{s}-\abs{i})}.
	\label{eqn:goodpk}
\end{equation}
Combining \eqref{eqn:goodpk} with \eqref{eqn:good2}, we get (for $i\ne -1,0,1$, $k\geq 1$ and $\abs{s}>\abs{i}-1$)
\begin{equation}\label{eqn:good3}
	\abs{u_i}(s) \leq C(k) e^{\alpha \abs{i}} e^{k(\abs{s}-\abs{i})}.
\end{equation}

Now we fix $s$ and sum up $u_i$ for $i=- L+1, \cdots, L$, to get
\begin{equation}\label{eqn:sum}
	\begin{split}
	\sum_{i=-L+1}^L \abs{u_i (s,\theta)} &\leq \left( \sum_{i=-1,0,1}+ \sum_{1<\abs{i}< \abs{s}+1} + \sum_{L\geq \abs{i}\geq \abs{s}+1;i\ne -L}\right)  \abs{u_i}(s,\theta)\\
	&\leq C  (\abs{s}+1) + C e^{k\abs{s}} \sum_{\abs{i}< \abs{s}+1} e^{(\alpha-k) \abs{i}} + C  e^{(k+1)s} \sum_{\abs{i}\geq \abs{s}+1} e^{ (\alpha-k-1)\abs{i}} \\
	&\leq C(\alpha,k) e^{\alpha \abs{s}}.
	\end{split}
\end{equation}
Here, we have used \eqref{eqn:u01} for the first sum, \eqref{eqn:good3} for the second one and \eqref{eqn:good1} for the last one. This concludes the proof of Lemma \ref{lem:centerkey} when $k\geq 1$.

When $k=0$, Lemma \ref{lem:harmonic1} implies
\begin{eqnarray*}
	\abs{P_0}(s) &\leq& C e^{\alpha \abs{i}} (1+ \frac{\abs{s}}{\abs{i}}). 
\end{eqnarray*}
Combine this with \eqref{eqn:good2}, we get (for $i\ne -1,0,1$ and $\abs{s}>\abs{i}-1$ and $k=0$)
\begin{equation}\label{eqn:good4}
	\abs{u_i}(s) \leq C  e^{\alpha \abs{i}} \left( 1+ \frac{\abs{s}}{\abs{i}} + \abs{s} - \abs{i} \right)\leq C  e^{\alpha \abs{i}} \left( \abs{s} - \abs{i}+1 \right).
\end{equation}
We can now argue as in \eqref{eqn:sum}, except that we use \eqref{eqn:good4} instead of \eqref{eqn:good3} to estimate the second sum. More precisely, 
\begin{eqnarray*}
	\sum_{1<\abs{i}< \abs{s}+1} \abs{u_i}(s,\theta) &\leq & \sum_{1< \abs{i} <\abs{s}+1} C e^{\alpha \abs{i}} (\abs{s}- \abs{i}+1) \\
	&\leq& C e^{\alpha \abs{s}}. 
\end{eqnarray*}
In fact, the last inequality above is equivalent to
\begin{eqnarray*}
	\sum_{1< \abs{i}< \abs{s}+1} e^{\alpha (\abs{i}-\abs{s})} (\abs{s}- \abs{i}+1)&\leq& C,
\end{eqnarray*}
which, by setting $j= \abs{s}+ 1 -\abs{i}$, follows from the obvious fact
\begin{equation*}
	\sum_{j=0}^\infty e^{-\alpha (j-1)} j \leq C.
\end{equation*}
This finishes the proof of Lemma \ref{lem:centerkey}.

\section{$C^{1,\alpha}$ neck analysis}\label{sec:proof}

In this section, we prove Theorem \ref{thm:main1} and its corollaries. 

\subsection{Proof of Theorem \ref{thm:main1}}\label{sub:main1}
For simplicity, we omit the subscript $i$ and write $u$ for $u_i$, $\lambda$ for $\lambda_i$, etc. 

Recall that there is a vector $p$ such that $u-p$ is $O(\eta^\alpha)$ for some $\alpha>0$ (see \eqref{eqn:start} in Section \ref{sec:pre}). More precisely, there is a constant $C_1>0$ (independent of $i$) such that
\begin{equation*}
	\abs{u-p}(t,\theta)\leq C_1 \eta^\alpha
\end{equation*}
for $t\in [\log \lambda/\delta,\log \delta]$.

\begin{rem}
	From now on, we shall use $C_2, C_3,\cdots$ to denote constants that depends on $C_1$. Hence, they will depend on the sequence of maps $u_i$ and the geometry of the target manifold, but not on $i$. In this paper, we fix $\delta$, so we allow them to depend on $\delta$ as well.
\end{rem}

Since the second fundamental form $A(u)$ is smooth, $A(u)=A(p)+O(\eta^\alpha)$. By Definition \ref{defn:O}, $\tilde{\nabla} u$ is still $O(\eta^\alpha)$. Here $\tilde{\nabla}$ is the gradient operator with respect to the cylinder metric. Hence, the right hand side of the harmonic map equation satisfies
\begin{equation*}
	\abs{A(u)(\tilde{\nabla} u, \tilde{\nabla} u)}\leq C_2 \eta^{2\alpha},
\end{equation*}
for $t\in [\log \lambda/\delta, \log \delta]$, as long as $\alpha<1$.

\begin{rem}
	By Definition \ref{defn:O}, when we say that a function is in $O(\eta^\alpha)$, we mean there exist constants $C(k_1,k_2)$ such that the inequalities in Definition \ref{defn:O} hold. In case necessary, we use an inequality as above to indicate that the constants involved depend on $C_1$.
\end{rem}

Lemma \ref{lem:key} gives a solution $v$ to the equation
\begin{equation}\label{eqn:poissonv}
	\tilde{\triangle} v = A(u)(\tilde{\nabla} u, \tilde{\nabla} u),
\end{equation}
satisfying
\begin{equation*}
	\abs{v}\leq C_3\eta^{2\alpha}, \qquad \forall t\in [\log \lambda/\delta,\log\delta].
\end{equation*}
By the elliptic estimates for equation \eqref{eqn:poissonv}, $v$ is $O(\eta^{2\alpha})$.
Hence, there is a harmonic function $h$ on $N$ such that
\begin{equation*}
	u=h+O(\eta^{2\alpha}).
\end{equation*}
The $C^0$ norm of $h$ depends on the $C^0$ norm of $u$ and the $C^0$ norm of $v$, which by Lemma \ref{lem:key} depends on the constants $C(k_1,k_2)$ in the definition of $u\in O(\eta^\alpha)$. 

Being a harmonic function, $h$ has an expansion discussed in Section \ref{sub:harmonic}. In particular, Lemma \ref{lem:harmonic1} gives estimates for the coefficients in the expansion. Since Lemma \ref{lem:harmonic1} is formulated for the cylinder $[-M,M]\times S^1$, we now translate it into a version that works on $[\log (\lambda/\delta), \log \delta]\times S^1$.
\begin{lem}
	\label{lem:harmonic11}
	Suppose that $h$ is a harmonic function on $[\log(\lambda/\delta),\log \delta]\times S^1$ and $\norm{h}_{C^0([\log(\lambda/\delta),\log \delta]\times S^1)}\leq C$. Then
\begin{eqnarray*}
	h(t,\theta) &=&  a_0 + b_0 (t-\frac{1}{2}\log \lambda)  \\ 
	&& + \sum_{n=1}^\infty \left( a_n e^{nt}\cos n\theta + b_n e^{nt} \sin n\theta + c_n \lambda e^{-nt}\cos n\theta + d_n \lambda e^{-nt} \sin n\theta \right)
\end{eqnarray*}
with
	\begin{equation*}
		\abs{a_0}<\tilde{C}, \qquad \abs{b_0} \leq \tilde{C} / (-\log \lambda) 
	\end{equation*}
	and
	\begin{equation*}
		\abs{a_n}, \abs{b_n}, \abs{c_n}, \abs{d_n} \leq \tilde{C}.
	\end{equation*}
	Here $\tilde{C}$ depends only on $C$ and $\delta$, not on $\lambda$.
\end{lem}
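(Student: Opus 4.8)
The plan is to reduce this statement directly to Lemma \ref{lem:harmonic1} by an affine change of the $s$-variable, matching the exponential growth/decay factors that appear once the cylinder is re-centered. First I would set $s = t - \frac12\log\lambda$, so that the interval $[\log(\lambda/\delta),\log\delta]$ in the $t$-variable becomes $[-M,M]$ in the $s$-variable with $M = \frac12(-\log\lambda) + \log(1/\delta)$; in particular $M \geq 1$ once $i$ is large (since $\lambda_i\to 0$), so Lemma \ref{lem:harmonic1} applies with $\epsilon = C$. Writing the standard cylinder expansion \eqref{eqn:wellknown} for $h$ in the $s$-coordinate gives coefficients $\hat a_0,\hat b_0,\hat a_n,\hat b_n,\hat c_n,\hat d_n$, and Lemma \ref{lem:harmonic1} bounds them by $\hat a_0 < C$, $|\hat b_0|\leq 2C/M$, and $|\hat a_n|,|\hat b_n|,|\hat c_n|,|\hat d_n|\leq 4Ce^{-nM}$.

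Next I would substitute $s = t - \frac12\log\lambda$ back and read off how each term transforms. The constant and linear terms give $\hat a_0 + \hat b_0 (t-\frac12\log\lambda)$ directly, so one sets $a_0 = \hat a_0$, $b_0 = \hat b_0$; since $M \geq \frac12(-\log\lambda)$, the bound $|\hat b_0|\leq 2C/M$ yields $|b_0|\leq 4C/(-\log\lambda)$, i.e. the claimed $\tilde C/(-\log\lambda)$. For the oscillating modes, note $e^{\pm n s} = e^{\pm n t} \cdot e^{\mp \frac n2 \log\lambda} = e^{\pm n t}\lambda^{\mp n/2}$. Hence $\hat a_n e^{ns}\cos n\theta = (\hat a_n \lambda^{-n/2}) e^{nt}\cos n\theta$, so one sets $a_n = \hat a_n\lambda^{-n/2}$, and similarly $b_n = \hat b_n \lambda^{-n/2}$; while $\hat c_n e^{-ns}\cos n\theta = (\hat c_n \lambda^{n/2}) e^{-nt}\cos n\theta = (\hat c_n\lambda^{-n/2})\,\lambda\, e^{-nt}\cos n\theta$, giving $c_n = \hat c_n\lambda^{-n/2}$ and likewise $d_n = \hat d_n\lambda^{-n/2}$. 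The factors $\lambda$ in front of the $e^{-nt}$ terms in the statement are exactly what makes this bookkeeping come out symmetric. Finally, since $M \geq \frac12(-\log\lambda) = \frac12\log(1/\lambda)$, we have $e^{-nM}\leq \lambda^{n/2}$, so $|a_n| = |\hat a_n|\lambda^{-n/2} \leq 4C e^{-nM}\lambda^{-n/2}\leq 4C$, and the same bound holds for $b_n,c_n,d_n$. Taking $\tilde C = 4C$ (absorbing the $\delta$-dependence coming from the difference between $M$ and $\frac12\log(1/\lambda)$, which only helps) completes the estimates.

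The only genuinely delicate point is keeping the exponential factors straight: one must check that the extra $\log(1/\delta)$ in $M$ (beyond $\frac12\log(1/\lambda)$) is harmless, which it is because it only makes $e^{-nM}$ smaller, and that the asserted form of the expansion — with $\lambda e^{-nt}$ rather than $e^{-nt}$ in the decaying modes — is precisely dictated by the substitution. Once the substitution is performed carefully, every inequality is an immediate consequence of Lemma \ref{lem:harmonic1}, so I do not expect any real obstacle beyond this bookkeeping.
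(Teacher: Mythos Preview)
Your approach is exactly what the paper intends: it introduces Lemma \ref{lem:harmonic11} as the translation of Lemma \ref{lem:harmonic1} under $s = t - \tfrac12\log\lambda$ and gives no further proof, so your bookkeeping is the whole argument.

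Two small slips to fix. First, since $\delta<1$ one has $M = \log\delta - \tfrac12\log\lambda$, which is \emph{smaller} than $-\tfrac12\log\lambda$; hence $e^{-nM} = \delta^{-n}\lambda^{n/2}$ carries an extra factor $\delta^{-n}>1$, not a helpful factor $<1$ as you assert. This is harmless because $\tilde C$ is allowed to depend on $\delta$ (and, as used in the paper, only $n=1$ matters), but your stated reason is backwards. Second, the identity $(\hat c_n\lambda^{n/2}) = (\hat c_n\lambda^{-n/2})\lambda$ holds only for $n=1$; in general $c_n = \hat c_n\lambda^{n/2-1}$, which actually yields a stronger bound. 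Neither point affects the conclusion.
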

Set
\begin{equation}\label{eqn:pq}
	p=a_0-\frac{1}{2}b_0 \log \lambda \quad \text{and} \quad q=b_0.
\end{equation}
If $2\alpha<1$, Lemma \ref{lem:kplusone} implies that
\begin{equation}\label{eqn:pqt}
	u=p+qt+ O(\eta^{2\alpha}).
\end{equation}

We claim that $q$ is so small that $qt$ is absorbed into $O(\eta^{2\alpha})$ and hence $u=p+O(\eta^{2\alpha})$. To see this, we use the Pohozaev identity of harmonic maps, i.e.
\begin{equation}\label{eqn:poho}
	\int_{S^1} \abs{\partial_t u}^2 d\theta = \int_{S^1} \abs{\partial_\theta u}^2 d\theta.
\end{equation}
By \eqref{eqn:pqt}, the above equation implies that
\begin{equation*}
	\abs{q}^2 + q\cdot O(\eta^{2\alpha}) + O(\eta^{4\alpha})= O(\eta^{4\alpha}).
\end{equation*}
By the Young's inequality, we have
\begin{equation*}
	\abs{q}^2 \leq O(\eta^{4\alpha}).
\end{equation*}
Since $q$ is a constant (independent of $t$), we may evaluate the above inequality at $t=\frac{1}{2}\log \lambda$ (the center of the neck) to see that $\abs{q}\leq C_4 (\sqrt{\lambda})^{2\alpha}$. By the definition of $\eta$,
\begin{equation*}
	\abs{qt}\leq C_5 \eta^{2\alpha}
\end{equation*}
on the neck. This proves the claim, which allows us to repeat the above argument (with $2\alpha$ in place of $\alpha$) until $2\alpha$ becomes larger than $1$. (In case $2\alpha=1$, we take a slightly smaller $\alpha$ and argue as in the case $2\alpha<1$.)

If $2>2\alpha>1$, Lemma 3.4 and Lemma \ref{lem:harmonic11} imply that
\begin{equation}\label{eqn:goodu}
	u=p+qt+ae^t\cos\theta + be^t\sin \theta + c \lambda e ^{-t}\cos\theta + d\lambda e^{-t}\sin\theta + O(\eta^{2\alpha})
\end{equation}
where $a,b,c,d$ are bounded by constants independent of $\lambda$. The uniform bound for $q$ follows from \eqref{eqn:moreover} in Theorem \ref{thm:main1}, which will be proved in a minute. Assuming this, the uniform bound for $p$ follows from \eqref{eqn:pq}.

To conclude the proof of Theorem \ref{thm:main1}, it remains to show \eqref{eqn:moreover}. By recycling the variable $\alpha$, we write $1+\alpha$ ($\alpha\in (0,1)$) in the place of $2\alpha$ in \eqref{eqn:goodu}.

By \eqref{eqn:goodu}, we compute
\begin{equation*}
	\partial_t u=q +  a e^t \cos \theta + b e^t \sin \theta - c \lambda e^{-t} \cos\theta - d\lambda e^{-t} \sin \theta + O(\eta^{1+\alpha})
\end{equation*}
and
\begin{equation*}
	\partial_\theta u= - a e^t \sin \theta + b e^t \cos \theta - c\lambda e^{-t} \sin\theta + d\lambda e^{-t} \cos \theta + O(\eta^{1+\alpha}).
\end{equation*}
Again, the Pohozaev identity \eqref{eqn:poho} implies
\begin{eqnarray*}
	&& 2 \abs{q}^2 + \abs{a e^t- c\lambda e^{-t}}^2 + \abs{b e^t -d\lambda e^{-t}}^2 + q\cdot O(\eta^{1+\alpha}) \\
	&=&  \abs{a e^t + c\lambda e^{-t}}^2 + \abs{b e^t + d\lambda e^{-t}}^2 + O(\eta^{2+\alpha}),
\end{eqnarray*}
which is
\begin{equation*}
	\abs{q}^2 = 2 \lambda (a\cdot c+ b\cdot d) + q\cdot O(\eta^{1+\alpha}) + O(\eta^{2+\alpha}).
\end{equation*}
Since $\lambda(a\cdot c+ b\cdot d)$ is $O(\eta^2)$, we may use the Young's inequality again to see that $\abs{q}\leq C\sqrt{\lambda}$, so that $q\cdot O(\eta^{1+\alpha})$ can be absorbed into $O(\eta^{2+\alpha})$.
Therefore,
\begin{equation*}
	\abs{q}^2 = 2\lambda (a \cdot c +  b\cdot d) + O(\eta^{2+\alpha}).
\end{equation*}
Since $q$ is a constant, we may evaluate the above equation at $t=\frac{1}{2}\log \lambda$ to get \eqref{eqn:moreover}.

\subsection{The shape of the center of the neck}
As a result of \eqref{eqn:holder}, the image of $u_i$ disappears when $i$ goes to infinity and $\delta$ goes to zero. That is exactly why this result is named `no neck theorem'. 
Now with the help of Theorem \ref{thm:main1}, we may consider a scaling of the target manifold, or a scaling of the Euclidean space $\Real^p$ (in which $N$ is embedded isometrically), to see the shape of the center part of the neck. More precisely, for any $M>0$, we define for $(s,\theta)\in [-M,M]\times S^1$,
\begin{equation*}
	v_i(s,t):=\frac{1}{\sqrt{\lambda_i}}\left( u_i(s+\frac{1}{2}\log \lambda_i, \theta) - (p_i+q_i\frac{1}{2}\log \lambda_i) \right).
\end{equation*}

By Definition \ref{defn:O}, if $w$ is $O(\eta^\alpha)$, then $\tilde{w}(s,\theta)= \frac{1}{\sqrt{\lambda_i}}w(s+\frac{1}{2}\log \lambda_i,\theta)$ satisfies
\begin{equation}\label{eqn:ot}
	\abs{\partial_s^{k_1}\partial_\theta^{k_2}\tilde{w}}\leq C(k_1,k_2) (\sqrt{\lambda_i})^{\alpha-1} \left( e^s+e^{-s} \right)^{\alpha}
\end{equation}
for some constants $C(k_1,k_2)$. In what follows, we denote any function satisfying \eqref{eqn:ot} by $\tilde{O}(\alpha-1)$. For fixed $M$ and $\alpha>1$, when $i$ goes to $\infty$ and $\lambda_i$ goes to zero, $\tilde{O}(\alpha-1)$ converges smoothly to zero.

Theorem \ref{thm:main1} implies that on $[-M,M]\times S^1$, we have
\begin{equation*}
	v_i(s,\theta)=\frac{q_i}{\sqrt{\lambda_i}}s+  a_i e^s \cos\theta + b_i e^s \sin \theta + c_i e^{-s}\cos \theta + d_i e^{-s} \sin \theta + \tilde{O}(\alpha-1), 
\end{equation*}
where $q_i/\sqrt{\lambda_i}$, $a_i$, $b_i$, $c_i$ and $d_i$ are uniformly bounded. Hence, by passing to a subsequence if necessary, we obtain a limit $v_\infty$ in $C^\infty$ topology as required in Corollary \ref{cor:harmonic}.

\vskip 8pt
For the proof of Theorem \ref{thm:main2}, we assume 
\vskip 5pt
\noindent (iv) $u_i$ are weakly conformal.
\vskip 5pt
Since $u_i$ are harmonic maps, they are (branched) minimal immersions. Being weakly conformal is a property that remains valid after scaling and passes on to the limit. Hence, the map $v_\infty$ in Corollary \ref{cor:harmonic} is weakly conformal. Using this fact, we give the proof of Theorem \ref{thm:main2} as follows.
\begin{proof}[Proof of Theorem \ref{thm:main2}]
For simplicity, we omit the subscript $\infty$ in the following computation.
Recall that the limit of the center neck region is given by
\begin{equation*}
	v(s,\theta)= qs + a e^s\cos \theta + b e^s \sin \theta +c e^{-s}\cos\theta + de^{-s}\sin \theta. 
\end{equation*}

Direct computation gives
\begin{eqnarray*}
	\partial_s v &=&  q+ a e^s\cos \theta + b e^s \sin \theta -c e^{-s}\cos\theta - de^{-s}\sin \theta \\
	\partial_\theta v &=& - a e^s\sin \theta + b e^s \cos \theta -c e^{-s}\sin\theta + de^{-s}\cos \theta. 
\end{eqnarray*}
Hence, we have
\begin{eqnarray*}
	\abs{\partial_s v}^2 &=& \abs{q}^2 + 2 q\cdot a e^s \cos \theta +2  q\cdot b e^s \sin \theta -2  q\cdot c e^{-s}\cos \theta -2  q\cdot d e^{-s}\sin \theta \\
	&& + \abs{a}^2 e^{2s} \cos^2 \theta +2 a\cdot b e^{2s} \cos \theta \sin\theta -2 a\cdot c \cos^2 \theta -2 a\cdot d \cos\theta \sin\theta \\
	&& + \abs{b}^2 e^{2s} \sin^2 \theta -2 b\cdot c \cos\theta\sin\theta -2 b\cdot d \sin^2\theta  \\
	&& + \abs{c}^2 e^{-2s} \cos^2\theta +2 c\cdot d e^{-2s} \cos\theta\sin\theta + \abs{d}^2 e^{-2s}\sin^2\theta
\end{eqnarray*}
and
\begin{eqnarray*}
	\abs{\partial_\theta v}^2 &=& \abs{a}^2 e^{2s}\sin^2\theta  -2a\cdot b e^{2s} \cos\theta\sin\theta + 2a\cdot c \sin^2\theta - 2a\cdot d \sin\theta\cos\theta \\
	&& + \abs{b}^2 e^{2s}\cos^2 \theta - 2 b\cdot c \cos\theta\sin\theta +  2 b\cdot d \cos^2 \theta  \\
	&& + \abs{c}^2 e^{-2s}\sin^2\theta - 2 c\cdot d e^{-2s}\cos\theta\sin\theta + \abs{d}^2 e^{-2s}\cos^2\theta.
\end{eqnarray*}
Being weakly conformal requires that $\abs{\partial_s v}= \abs{\partial_\theta v}$ for all $(s,\theta)$, which implies that 
\begin{eqnarray}\nonumber
	\abs{q}^2&=& 2(a\cdot c + b\cdot d) \\\nonumber
	q\cdot a &=& q\cdot b = q\cdot c = q\cdot d =0 \\ 
	\label{eqn:a1}	\abs{a}&=& \abs{b} \\
	\label{eqn:a2} \abs{c}&=& \abs{d} \\
	\label{eqn:a3} a\cdot b &=&  c\cdot d =0.
\end{eqnarray}
Bearing this in mind, we compute
\begin{eqnarray*}
	\partial_t v \cdot \partial_\theta v &=&  -\abs{a}^2 e^{2s}\cos\theta\sin \theta + a\cdot b e^{2s}\cos^2\theta - a\cdot c \cos\theta\sin\theta + a\cdot d \cos^2\theta \\
	&& -a\cdot b e^{2s}\sin^2\theta +\abs{b}^2 e^{2s}\cos\theta\sin\theta -b\cdot c \sin^2\theta + b\cdot d \cos\theta\sin\theta \\
	&& + a\cdot c \cos\theta\sin\theta -b\cdot c \cos^2\theta +\abs{c}^2 e^{-2s}\cos\theta\sin\theta -c\cdot d e^{-2s}\cos^2\theta \\
	&& + a\cdot d \sin^2\theta - b\cdot d \cos\theta\sin\theta + c\cdot d e^{-2s}\sin^2\theta - \abs{d}^2e^{-2s}\cos\theta \sin\theta \\
	&=& (a\cdot d - b\cdot c).
\end{eqnarray*}
Hence, the conformality of $v_\infty$ further implies that
\begin{equation*}
	a\cdot d - b\cdot c =0.
\end{equation*}
This concludes the proof of Theorem \ref{thm:main2}.
\end{proof} 

\begin{rem}
\eqref{eqn:a1}, \eqref{eqn:a2} and $\eqref{eqn:a3}$ are nothing but the requirement that $u_\infty$ and $\omega$ are both weakly conformal.
\end{rem}

Now, let's turn to the proof of Corollary \ref{cor:minimal}.

Recall that $u_i$ maps into the manifold $N$, which is a submanifold of $\Real^p$. After scaling and passing to the limit, $v_\infty$ maps into the tangent space of $N$ at $p_\infty$, which we assume is $\Real^n(\subset \Real^p)$. Hence, we may regard $q_\infty, a_\infty,b_\infty,c_\infty$ and $d_\infty$ as vectors in $\Real^n$.

Since $\dim N=3$ and the pairs $(a_\infty,b_\infty)$ and $(c_\infty,d_\infty)$ span two planes by the assumption of Corollary \ref{cor:minimal}, it suffices to exclude the possibility that $a_\infty,b_\infty,c_\infty$ and $d_\infty$ span the whole $\Real^3$. We argue by contradiction. If $\mbox{Span}\set{a_\infty,b_\infty,c_\infty,d_\infty}=\Real^3$, then \eqref{eqn:q1} implies that $q_\infty=0$ and hence \eqref{eqn:q2} and \eqref{eqn:q3} together imply that
\begin{equation}
	\left\{
		\begin{array}[]{l}
			a_\infty\cdot c_\infty + b_\infty\cdot d_\infty =0 \\
			a_\infty \cdot d_\infty -b_\infty \cdot c_\infty =0.
		\end{array}
		\right.
	\label{eqn:position}
\end{equation}
The equation \eqref{eqn:position} describes the relative position of two planes spanned by $(a_\infty,b_\infty)$ and $(c_\infty,d_\infty)$ respectively. The relative position is independent of the choice of orthonormal basis (with orientation) in the plane. Indeed, for any $\theta\in [0,2\pi]$, if we set
\begin{equation*}
	\left\{
		\begin{array}[]{l}
			\tilde{a}_\infty = \cos\theta a_\infty + \sin \theta b_\infty \\
			\tilde{b}_\infty = -\sin\theta a_\infty + \cos \theta b_\infty,
		\end{array}
		\right.
\end{equation*}
then it is straightforward to check that \eqref{eqn:position} holds with $a_\infty$ and $b_\infty$ replaced by $\tilde{a}_\infty$ and $\tilde{b}_\infty$. The same applies to $c_\infty$ and $d_\infty$.

Since $n=3$, we know the two planes intersect in a line. By the discussion above, we may assume that $a_\infty=c_\infty$ by rotations and scaling. Together with $\abs{a_\infty}=\abs{b_\infty}=\abs{c_\infty}=\abs{d_\infty}$, the first equation in \eqref{eqn:position} implies that
\begin{equation*}
	\abs{a_{\infty}}^2 = \abs{c_\infty}^2= - b_\infty\cdot d_\infty = \abs{b}^2 =\abs{d}^2,
\end{equation*}
which forces $b_\infty=-d_\infty$. Therefore, $\mbox{Span}\set{a_\infty,b_\infty}=\mbox{Span}\set{c_\infty,d_\infty}$ and this contradicts our assumption that $a_\infty,b_\infty,c_\infty,d_\infty$ span $\Real^3$. This proved the first assertion in Corollary \ref{cor:minimal}. 

\begin{rem}
	We may also learn from the above proof that in case $q_\infty=0$, $(a_\infty,b_\infty)$ and $(c_\infty,d_\infty)$ span planes with different orientation. This will be proved again below.
\end{rem}

We now set up new coordinates on the plane so that $a_\infty=(1,0)$ and $b_\infty=(0,1)$. Setting $c_\infty=(x_c,y_c)$ and $d_\infty=(x_d,y_d)$, \eqref{eqn:q2} and \eqref{eqn:q3} becomes
\begin{eqnarray} \label{eqn:qq2}
	\abs{q_\infty}^2 &=& 2( x_c + y_d)  \\
	0&=& x_d -y_c . \label{eqn:qq3}
\end{eqnarray}
Recall that $c_\infty \perp d_\infty$, which gives
\begin{equation}\label{eqn:normal}
	x_c x_d + y_c y_d =0.
\end{equation}

{\bf Case 1.} ($x_d= y_c \ne 0$). \eqref{eqn:normal} gives that $x_c=-y_d$, which implies that $q_\infty=0$ by \eqref{eqn:qq2}. The orientation of the plane spanned by $(c_\infty,d_\infty)$ is given by the determinant
\begin{equation*}
	x_c y_d -x_d y_c = - (x_c^2 +y_c^2) <0,
\end{equation*}
which is opposite to the orientation given by $(a_\infty,b_\infty)$.

{\bf Case 2.} ($x_d=y_c=0$). By $\abs{c_\infty}=\abs{d_\infty}>0$, we have $\abs{x_c}=\abs{y_d}\ne 0$. When $q_\infty=0$, $x_c=-y_d$ and the orientations are opposite again; when $q_\infty\ne 0$, $x_c=y_d=\lambda>0$, the orientations are the same and $c_\infty=\lambda a_\infty$ and $d_\infty=\lambda b_\infty$, which implies that \eqref{eqn:vinf} parametrizes a catenoid.

\section{The index inequality}\label{sec:index}

In this section, we prove Theorem \ref{thm:main3}. We prove \eqref{eqn:ni} only and the proof of  \eqref{eqn:n} is the same. The proof studies the limit of eigenfunctions of $J_{u_i}$. As we know, the eigenfunctions depend on a choice of metric. Instead of a fixed metric, we construct a sequence of conformal metrics $g_i$ in the following subsection.

\subsection{A special sequence of conformal metrics} \label{sub:metrics}
Recall that both being a harmonic map and the $E$-index($E$-nullity) of a harmonic map are conformal invariant. Hence, we may assume that $g$ is flat in a neighborhood of $p$. Precisely, we assume that $x,y$ are isothermal coordinates near $p$ and $g=dx^2+dy^2$ in $B=\set{x^2+y^2\leq 1}$. We denote the polar coordinates associated to $(x,y)$ by $(r,\theta)$ and the radius $r$ ball centered at the origin by $B_r$.

The bubble map can be regarded as a harmonic map from $\Real^2$ instead of $S^2$. While the natural choice of metric on $\Real^2$ is either the flat metric or the spherical metric (the pull back of round metric by stereographic projection), it is convenient for us to use a modification (denoted by $g_b$) of the spherical metric on $\Real^2$ such that the geometry of a neighborhood of the infinity is that of a punctured flat disk (with the hole corresponding to the infinity). More precisely, we fix any smooth function $f:[0,\infty)\to \Real$ satisfying
	\begin{equation}\label{eqn:gbf}
		f(r)=\left\{
			\begin{array}[]{ll}
				\left( \frac{1}{1+r^2} \right)^2 & r\leq 1\\
				\frac{1}{r^4} & r>2
			\end{array}
			\right.
	\end{equation}
and define 
\begin{equation}\label{eqn:gb}
	g_b= f(r) (dr^2+ r^2 d\theta^2)
\end{equation}
where $(r,\theta)$ is the polar coordinates on $\Real^2$.
\begin{rem}
	In fact, it suffices for us to have $f=\frac{1}{r^4}$ for $r>2$.  The idea that $g_b$ is related to a round sphere is not necessary for the proof.
\end{rem}

Now let $u_i$ be the sequence in Theorem \ref{thm:main3}. In general, since $p$ is the only concentration point, there exist $x_i\to p$ and $\lambda_i\to 0$ such that 
\begin{equation*}
	u_i(x_i+ \lambda_i y)
\end{equation*}
converges locally smoothly to the bubble map $\omega$ (see (ii) in the introduction). For the sake of simplicity, we assume that $x_i=p$ for all $i$. The loss of generality is small and the structure of the proof remains the same.

The key to the proof of Theorem \ref{thm:main3} is a sequence of metrics $g_i$ associated to $u_i$ (or, $\lambda_i$, more precisely). For the definition of $g_i$, we recall the standard catenoid metric 
\begin{equation*}
	\left( \frac{e^t+e^{-t}}{2} \right)^2 (dt^2+d\theta^2),
\end{equation*}
or equivalently, by setting $r=e^t$,
\begin{equation*}
	\left( \frac{1+r^{-2}}{2} \right)^2 \left( dr^2+r^2 d\theta^2 \right).
\end{equation*}
Aligning the center of the catenoid with the center of the neck by replacing $r$ in the above formula by $\frac{r}{\sqrt{\lambda_i}}$ and multiplying the metric by $4\lambda_i$, we get
\begin{equation}\label{eqn:git}
	\tilde{g}_i:=\left( 1+\frac{\lambda_i}{r^2} \right)^2 \left( dr^2+r^2 d\theta^2 \right).
\end{equation}
Letting $\varphi:[0,\infty)\to [0,1]$ be a smooth function satisfying $\varphi(s)\equiv 0$ for $s\leq 1$ and $\varphi(s)\equiv 1$ for $s\geq 2$, we define
\begin{equation}\label{eqn:gi}
	g_i=\left\{
		\begin{array}[]{ll}
			g & \text{on} \quad M\setminus B_{1/2} \\
			\varphi(4r) g + (1-\varphi(4r)) \tilde{g}_i &\text{on} \quad B_{1/2}\setminus B_{1/4} \\
			\tilde{g}_i &\text{on} \quad B_{1/4} \setminus B_{ 4\lambda_i} \\
			\varphi(r/(2 \lambda_i)) \tilde{g}_i + (1-\varphi(r/(2 \lambda_i))) (L_i)^*g_b &\text{on} \quad B_{4 \lambda_i} \setminus B_{2 \lambda_i} \\
			(L_i)^* g_b &\text{on} \quad B_{2 \lambda_i},
		\end{array}
		\right.
\end{equation}
where $L_i:\Real^2 \to \Real^2$ maps $(x,y)$ to $\frac{1}{\lambda_i}(x,y)$. 

We now summarize some important properties of $g_i$, which will be useful in the proof of Theorem \ref{thm:main3}. We start with an easier observation.
\begin{lem}
	\label{lem:gi1} For any $\delta\in (0,\frac{1}{4})$, we have, when $i\to \infty$,

	(1) $g_i$ converges to $g$ on $M\setminus B_\delta$;

	(2) $(L_i^{-1})^* g_i$ converges to $g_b$ on $B_{1/\delta}$;

	(3) The volume of $B_\delta\setminus B_{\lambda_i/\delta}$ with respect to $g_i$ satisfies
	\begin{equation}
		\lim_{i\to \infty} \mbox{Vol}_{g_i}(B_\delta\setminus B_{\lambda_i/\delta}) \leq C \delta^2
		\label{eqn:vol}
	\end{equation}
	for some universal constant $C>0$.
\end{lem}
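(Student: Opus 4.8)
The plan is to verify each of the three claims directly from the explicit construction of $g_i$ in \eqref{eqn:gi}, treating them as essentially bookkeeping statements about where the cutoff functions are supported. For claim (1), I would observe that for any fixed $\delta\in(0,\frac14)$ and all sufficiently large $i$ (so that $4\lambda_i<\delta/2$, say), the region $M\setminus B_\delta$ lies either in the piece where $g_i=g$ identically, or — if $\delta<1/2$ — in the pieces involving $\tilde g_i$ and the transition $\varphi(4r)g+(1-\varphi(4r))\tilde g_i$. There one uses that $\tilde g_i=(1+\lambda_i/r^2)^2(dr^2+r^2d\theta^2)\to dr^2+r^2d\theta^2=g$ uniformly on $\{r\geq\delta\}$ as $\lambda_i\to0$, since $\lambda_i/r^2\leq\lambda_i/\delta^2\to0$; all derivatives of the conformal factor converge as well because each derivative brings down a power of $\lambda_i$. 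Hence $g_i\to g$ in $C^\infty$ on $M\setminus B_\delta$.

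For claim (2), I would change variables by $L_i$: writing $z=L_i(x,y)=(x,y)/\lambda_i$, so $r=\lambda_i\rho$ with $\rho=|z|$, I compute $(L_i^{-1})^*\tilde g_i$. A direct substitution gives $(L_i^{-1})^*\tilde g_i=(1+1/\rho^2)^2(d\rho^2+\rho^2d\theta^2)$ (the factors of $\lambda_i$ cancel exactly because $\tilde g_i$ was built by rescaling $r\mapsto r/\sqrt{\lambda_i}$ and multiplying by $4\lambda_i$ — up to the harmless overall constant already discussed in the excerpt). By \eqref{eqn:gbf} this equals $g_b$ for $\rho\leq 1$, i.e.\ exactly where the spherical-type part lives. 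On $B_{1/\delta}$ in the $z$-coordinate, for $i$ large the set $L_i(B_{1/\delta})=B_{\lambda_i/\delta}$ in the $(x,y)$-picture is contained in $B_{4\lambda_i}$, so $g_i$ there is $(L_i)^*g_b$ or the transition $\varphi(r/(2\lambda_i))\tilde g_i+(1-\varphi(r/(2\lambda_i)))(L_i)^*g_b$; pulling back by $L_i^{-1}$ turns this into $g_b$ or a transition between $g_b$ and the rescaled $\tilde g_i$, and by the computation above the latter agrees with $g_b$ on the relevant range of $\rho$. Hence $(L_i^{-1})^*g_i=g_b$ on $B_{1/\delta}$ for all large $i$, which certainly converges to $g_b$.

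For claim (3), I would split $B_\delta\setminus B_{\lambda_i/\delta}$ into the subregions dictated by \eqref{eqn:gi} and estimate the $g_i$-volume of each. On the bulk $B_{1/4}\setminus B_{4\lambda_i}$ (intersected with the annulus), $g_i=\tilde g_i$ and the area element is $(1+\lambda_i/r^2)^2 r\,dr\,d\theta$; integrating over $r\in(\lambda_i/\delta,\delta)$ and $\theta\in S^1$ gives, after expanding the square, a bound of the form $C(\delta^2+\lambda_i\log(1/\lambda_i)+\lambda_i^2/\lambda_i)\leq C\delta^2$ once $\lambda_i$ is small — the point being that $\int_{\lambda_i/\delta}^{\delta}\lambda_i^2 r^{-3}\,dr\sim\lambda_i^2\cdot(\delta/\lambda_i)^2=\delta^2$ and $\int_{\lambda_i/\delta}^{\delta}\lambda_i r^{-1}\,dr\sim\lambda_i\log(\delta^2/\lambda_i)\to0$. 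The transition annuli $B_{1/2}\setminus B_{1/4}$ and $B_{4\lambda_i}\setminus B_{2\lambda_i}$ contribute $O(\lambda_i)$ and the piece near the infinity of the bubble contributes a volume controlled by $\int_{\lambda_i/\delta\leq r}$ of $f(r/\sqrt{\lambda_i})$-type factors, which is $O(\delta^2)$ by the same tail computation (this is essentially the finite area of the spherical cap of radius comparable to $\delta$). Summing these gives \eqref{eqn:vol}.

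The only mildly delicate point — the ``hard part'' — is making sure the normalization constants match up so that the two model metrics ($\tilde g_i$ after rescaling, and $g_b$) literally coincide on the overlap annulus $B_{4\lambda_i}\setminus B_{2\lambda_i}$ where \eqref{eqn:gi} interpolates between them; otherwise claim (2) would only give convergence, not equality, and the interpolation would introduce curvature one must track. This is precisely why the catenoid factor was taken to be $(1+\lambda_i/r^2)^2$ and why $g_b$ was defined with $f(r)=r^{-4}$ for large $r$: under $r=\lambda_i\rho$ one has $(1+\lambda_i/r^2)^2=(1+1/(\lambda_i\rho^2))^2$, hmm — so in fact the two are \emph{not} literally equal, and the honest statement is that $(L_i^{-1})^*\tilde g_i$ and $g_b$ agree up to a factor $(1+\lambda_i\rho^2/(\text{lower order}))$ that tends to $1$ with all derivatives on compacts; I would therefore phrase claims (1)–(2) as $C^\infty_{\mathrm{loc}}$ convergence rather than eventual equality, and for (3) it is enough to have a uniform two-sided comparison $\tfrac12\,\tilde g_i\leq g_i\leq 2\,\tilde g_i$ on the whole annulus (which follows from the cutoff definition since a convex combination of comparable metrics is comparable), reducing the volume bound to the single explicit integral for $\tilde g_i$ carried out above.
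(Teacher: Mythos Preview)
Your treatment of (1) is fine and matches the paper. The real problems are in (2), and (3) is needlessly complicated.

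For (2), the claim that the preimage $L_i^{-1}(B_{1/\delta})=B_{\lambda_i/\delta}$ is contained in $B_{4\lambda_i}$ is false: since $\delta<\tfrac14$ we have $\lambda_i/\delta>4\lambda_i$, so $B_{\lambda_i/\delta}$ strictly contains $B_{4\lambda_i}$ and reaches into the region where $g_i=\tilde g_i$. Your decomposition of $(L_i^{-1})^*g_i$ on $B_{1/\delta}$ is therefore wrong. Relatedly, the pullback formula $(L_i^{-1})^*\tilde g_i=(1+1/\rho^2)^2(d\rho^2+\rho^2 d\theta^2)$ is incorrect; with $r=\lambda_i\rho$ one gets $1+\lambda_i/r^2=1+1/(\lambda_i\rho^2)$ and $dr^2+r^2d\theta^2=\lambda_i^2(d\rho^2+\rho^2d\theta^2)$, hence
\[
(L_i^{-1})^*\tilde g_i=(\lambda_i+1/\rho^2)^2(d\rho^2+\rho^2 d\theta^2),
\]
which is what the paper computes. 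This is not equal to $g_b$ for any finite $i$; rather, as $i\to\infty$ it converges to $\rho^{-4}(d\rho^2+\rho^2 d\theta^2)$, which by \eqref{eqn:gbf} equals $g_b$ on $\{\rho>2\}$ --- the flat-punctured-disk end of $g_b$, not the spherical cap $\{\rho\le 1\}$ you point to. Your closing paragraph partially recovers by settling for convergence rather than equality, but with the wrong pullback formula and the wrong region you have not actually established the convergence on $B_{1/\delta}\setminus B_2$, which is where the content lies (on $B_2$ one has $(L_i^{-1})^*g_i=g_b$ exactly by construction).

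For (3), because $\delta<\tfrac14$ the annulus $B_\delta\setminus B_{\lambda_i/\delta}$ lies entirely inside $B_{1/4}\setminus B_{4\lambda_i}$, where $g_i=\tilde g_i$. There are no transition pieces or bubble caps to account for; the paper simply computes the single integral $2\pi\int_{\lambda_i/\delta}^{\delta}(1+\lambda_i/r^2)^2\, r\,dr$ via the substitution $r=\sqrt{\lambda_i}\,e^t$ and bounds it by $8\pi\delta^2$. Your expansion-of-the-square estimate also works, but the discussion of the annuli $B_{1/2}\setminus B_{1/4}$ and $B_{4\lambda_i}\setminus B_{2\lambda_i}$ concerns regions outside $B_\delta\setminus B_{\lambda_i/\delta}$ and should be removed.
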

\begin{proof}
	By \eqref{eqn:gi}, it suffices to check (1) on $B_{1/2}\setminus B_\delta$. In fact, $\tilde{g}_i$ converges to $g$ on $B_{1/2}\setminus B_\delta$, which follows from \eqref{eqn:git} and the fact that $g= dr^2 + r^2 d\theta^2$ in $B_{1/2}\setminus B_\delta$ and that $\lim_{i\to\infty} \frac{\lambda_i}{r^2}=0$ for $r\in (\delta,\frac{1}{2})$.

	The proof of (2) follows from the claim that 
	\begin{equation*}
		\varphi(r/(2 \lambda_i)) (L_i^{-1})^* \tilde{g_i} + (1-\varphi(r/(2\lambda_i))) g_b
	\end{equation*}
	converges to $g_b$ on $B_{1/\delta}\setminus B_{2}$. To see this, we compute (by using \eqref{eqn:git})
	\begin{equation*}
		(L_i^{-1})^* \tilde{g}_i = \left( \lambda_i + \frac{1}{r^2} \right)^2 \left( dr^2+ r^2 d\theta^2 \right). 
	\end{equation*}
	The claim is proved by taking $i\to \infty$ in the above equation and comparing with \eqref{eqn:gb} and \eqref{eqn:gbf}.

	By \eqref{eqn:git}, we compute
	\begin{align*}
		\mbox{Vol}_{g_i} (B_\delta\setminus B_{\lambda_i/\delta}) &= 2\pi \int^\delta_{\lambda_i /\delta} \left( 1+ \frac{\lambda_i}{r^2} \right)^2 r dr\\
		&= 2\pi \lambda_i \int^{\delta/\sqrt{\lambda_i}}_{\sqrt{\lambda_i}/\delta} \left( 1+r^{-2} \right)^2 r dr \\
		&= 2\pi \lambda_i \int^{\log (\delta/\sqrt{\lambda_i})}_{\log (\sqrt{\lambda_i}/\delta)} \left( e^t + e^{-t}\right)^2  dt \\
		&\leq 16 \pi \lambda_i \int_0^{\log (\delta/\sqrt{\lambda_i})} e^{2t} dt \\
		&= 8\pi \lambda_i \left( \frac{\delta^2}{\lambda_i} - 1 \right). 
	\end{align*}
	Hence, (3) is proved with $C=8\pi$.
\end{proof}

The next property of $g_i$ follows from the obvious fact that it is a catenoid metric in the region $B_{1/4}\setminus B_{4\lambda_i}$. It is well known that the mean value inequality holds on minimal surfaces, see Corollary 1.16 in \cite{colding2011course} for example. Although we take an intrinsic point of view here, the mean value inequality carries over.

\begin{lem}
	\label{lem:gi2}
	For any positive number $C_1>0$, there is $C_2$ depending on $C_1$ but not $i$ such that if a nonnegative function $w$ satisfies
	\begin{equation*}
		\triangle_{g_i} w \geq - C_1 w,\qquad \text{on} \quad B_{1/8} \setminus B_{8 \lambda_i}
	\end{equation*}
	then for sufficiently large $i$,
	\begin{equation*}
		\sup_{B_{1/16}\setminus B_{16 \lambda_i}} w \leq C_2 \int_{B_{1/8}\setminus B_{8 \lambda_i}} w dV_{g_i}.
	\end{equation*}
\end{lem}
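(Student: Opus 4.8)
The plan is to reduce Lemma \ref{lem:gi2} to the standard mean value inequality on minimal surfaces (Corollary 1.16 in \cite{colding2011course}) by observing that $(B_{1/4}\setminus B_{4\lambda_i}, g_i)$ is isometric to a piece of the standard catenoid, which is a minimal surface in $\Real^3$. Recall from \eqref{eqn:git} that on $B_{1/8}\setminus B_{8\lambda_i}$ the metric $g_i=\tilde g_i$ equals the catenoid metric $(1+\lambda_i/r^2)^2(dr^2+r^2d\theta^2)$, and after the change of variable $r=\sqrt{\lambda_i}e^t$ this is the rescaled catenoid metric $\lambda_i(e^t+e^{-t})^2(dt^2+d\theta^2)$ on the cylindrical segment $t\in[\log(8\sqrt{\lambda_i}),\log(1/(8\sqrt{\lambda_i}))]$. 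Thus the region in question is an intrinsic piece of a catenoid scaled by a factor of order $\sqrt{\lambda_i}$; scaling does not affect the validity of a mean value inequality, only the constant, and I will check the constant can be chosen uniformly in $i$.

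The key steps, in order, would be: first, record the isometry between $(B_{1/4}\setminus B_{4\lambda_i},g_i)$ and the genuine catenoid $\Sigma_i\subset\Real^3$ obtained by rescaling the standard catenoid by $\sqrt{\lambda_i}$; this is a minimal immersion, so the intrinsic Laplace--Beltrami operator $\triangle_{g_i}$ coincides with the one appearing in the Colding--Minicozzi mean value inequality for subsolutions of $\triangle_\Sigma w\ge -C_1 w$. Second, apply Corollary 1.16 of \cite{colding2011course} on intrinsic balls of $\Sigma_i$: for a point $x$ in the image of $B_{1/16}\setminus B_{16\lambda_i}$, there is an intrinsic ball of definite radius (measured in the $g_i$ metric) around $x$ that is still contained in the image of $B_{1/8}\setminus B_{8\lambda_i}$, and the mean value inequality on that ball gives $w(x)\le C_2\int w\,dV_{g_i}$. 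Third, verify that such a definite intrinsic radius exists uniformly in $i$: in the cylindrical coordinate $(t,\theta)$ the two regions $B_{1/16}\setminus B_{16\lambda_i}$ and $B_{1/8}\setminus B_{8\lambda_i}$ differ by cylindrical segments of fixed length (independent of $i$) at both ends, and since the catenoid metric is uniformly comparable to the flat cylinder metric $e^{\pm 2t}(dt^2+d\theta^2)$-weighted metric away from the center while being uniformly comparable to the flat product metric near the center, the intrinsic distance between the boundaries of the two regions is bounded below independently of $i$. Since the mean value constant in Corollary 1.16 depends only on $C_1$, the dimension, and the intrinsic radius used, this produces the desired $C_2=C_2(C_1)$.

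The main obstacle I anticipate is the uniform-in-$i$ control: a priori the inner and outer parts of the catenoid segment become geometrically degenerate as $\lambda_i\to 0$ (the total length of the cylindrical model grows like $\log(1/\lambda_i)$), so one must be careful that the intrinsic ball used for the mean value inequality at a worst-case point, namely near the two cut-off radii $r\sim 1/16$ and $r\sim 16\lambda_i$, still has radius bounded away from zero and still fits inside the larger region. This is really a statement that the "buffer" between $\{r=1/16\}$ and $\{r=1/8\}$ (and between $\{r=16\lambda_i\}$ and $\{r=8\lambda_i\}$) has definite intrinsic width; in the flared regions of the catenoid the metric is large, so intrinsic width is at least the flat-metric width there, which is a fixed constant, and near the center the two radii are separated by a fixed multiplicative factor in the cylindrical coordinate, again giving definite width in the (bounded-below) center metric. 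Once this geometric comparison is in place, the conclusion follows by directly quoting the minimal-surface mean value inequality, and no further computation is needed.
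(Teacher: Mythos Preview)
Your overall plan---identify the neck region with a piece of a catenoid in $\Real^3$ and invoke the minimal-surface mean value inequality (Corollary~1.16 of \cite{colding2011course})---is exactly the paper's strategy. The gap is in how you apply that corollary. Corollary~1.16 is stated for \emph{extrinsic} Euclidean balls $\hat B_s(y)\subset\Real^3$, not intrinsic balls of the surface: the hypothesis is $\partial\Sigma\cap\hat B_s(y)=\emptyset$, and the constant depends on $C_1$ and the ambient radius $s$. Your verification that the intrinsic $g_i$--distance between $\partial(B_{1/16}\setminus B_{16\lambda_i})$ and $\partial(B_{1/8}\setminus B_{8\lambda_i})$ is bounded below is correct but does not give what is needed, because intrinsic distance dominates extrinsic distance---a lower bound on the former says nothing about the latter. (There is no uniform intrinsic mean value inequality available here either: the Gauss curvature of the scaled catenoid at its waist is of order $-1/\lambda_i$.)

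What the paper does instead is a short computation of the actual position in $\Real^3$. With the parametrization $x_1+ix_2=\sqrt{\lambda_i}(e^t+e^{-t})e^{i\theta}$, $x_3=2\sqrt{\lambda_i}\,t$, one sees that as $i\to\infty$ the boundary circles of the larger region $\Omega_1$ (corresponding to $r=1/8$ and $r=8\lambda_i$) both sit at Euclidean radius $\sqrt{x_1^2+x_2^2}\to 1/8$ with $x_3\to 0$, while every point of the smaller region $\Omega_2$ (corresponding to $1/16\ge r\ge 16\lambda_i$) has $\sqrt{x_1^2+x_2^2}\le 1/16+o(1)$. Hence for large $i$ the Euclidean ball $\hat B_{1/32}(y)$ centered at any $y\in\Omega_2$ avoids $\partial\Omega_1$, and Corollary~1.16 applies with the fixed extrinsic radius $s=1/32$, producing a constant $C_2=C_2(C_1)$ independent of $i$. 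Replacing your intrinsic-buffer discussion with this extrinsic computation fixes the argument.
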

\begin{proof}
	Consider a scaling of the standard catenoid in $\Real^3$ (denoted by $\Sigma$) parametrized by
	\begin{equation*}
		\left\{
			\begin{array}[]{l}
				x_1=2\sqrt{\lambda_i} \frac{e^t+e^{-t}}{2}\cdot \cos\theta \\
				x_2=2\sqrt{\lambda_i} \frac{e^t+e^{-t}}{2}\cdot \sin\theta \\
				x_3=2\sqrt{\lambda_i} t.
			\end{array}
		\right.
	\end{equation*}

	By definition (see \eqref{eqn:gi}), $g_i$ is the induced metric and the equation \eqref{eqn:git} is the metric represented in coordinates $(r,\theta)$ where $r=\sqrt{\lambda_i} e^t$. By setting
	\begin{equation*}
		\Omega_1= \set{(x_1,x_2,x_3)\in \Sigma|\, t\in [\log (8\sqrt{\lambda_i}), -\log(8\sqrt{\lambda_i})]}
	\end{equation*}
	and
	\begin{equation*}
		\Omega_2= \set{(x_1,x_2,x_3)\in \Sigma|\, t\in [\log (16 \sqrt{\lambda_i}), -\log(16\sqrt{\lambda_i})]},
	\end{equation*}
	we notice that $\Omega_1$ and $\Omega_2$ corresponds to the domain $B_{1/8}\setminus B_{8\lambda_i}$ and $B_{1/16}\setminus B_{16\lambda_i}$ respectively. It is elementary to compute that
	\begin{gather*}
		\lim_{i\to \infty} \max_{\Omega_1} \abs{x_3} =0 \\
		\lim_{i\to \infty} \max_{\Omega_1} \sqrt{x_1^2+x_2^2}=\frac{1}{8} \\
		\lim_{i\to \infty} \max_{\Omega_2} \sqrt{x_1^2+x_2^2}=\frac{1}{16}.
	\end{gather*}
	As a consequence, for sufficiently large $i$ and any $y\in \Omega_2\subset \Real^3$, we have
	\begin{equation*}
		\hat{B}_{1/32}(y)\cap \partial \Omega_1 =\emptyset,
	\end{equation*}
	where $\hat{B}$ means the Euclidean ball in $\Real^3$. It then allows us to apply Corollary 1.16 of \cite{colding2011course} to show the existence of $C_2$ depending only on $C_1$ such that
	\begin{equation*}
		w(y)\leq C_2 \int_{\hat{B}_{1/32}(y)\cap \Omega_1} w dV_\Sigma.
	\end{equation*}
	Since $g_i$ is exactly the induced metric, our lemma follows.
\end{proof}

Here is another advantage of using $g_i$.
\begin{lem}
	\label{lem:gi3} There is a constant $C$ independent of $i$ such that $\abs{\nabla u_i}_{g_i}$ is uniformly bounded in $B_{1/8}\setminus B_{8\lambda_i}$.
\end{lem}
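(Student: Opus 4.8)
The plan is to exploit the fact that $g_i$ is (isometric to) a piece of a scaled catenoid in the region $B_{1/4}\setminus B_{4\lambda_i}$, and in particular a minimal surface in $\mathbb R^3$ with bounded second fundamental form there, combined with the known neck estimate \eqref{eqn:pre1}. Passing to cylinder coordinates $(t,\theta)$ with $t=\log r$, the metric $\tilde g_i$ in \eqref{eqn:git} reads $\left(\frac{e^t+\lambda_i e^{-t}}{\ }\right)^2$ times the flat cylinder metric; more precisely, writing $s = t - \frac12\log\lambda_i$, one has $\tilde g_i = \lambda_i\left(e^{s}+e^{-s}\right)^2(ds^2+d\theta^2)$, so the conformal factor relative to the flat cylinder is comparable to $\eta^2 = \lambda_i(e^s+e^{-s})^2$. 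Hence for a function $u$ on the neck,
\begin{equation*}
	\abs{\nabla u}^2_{g_i} = \frac{1}{(1+\lambda_i/r^2)^2\, r^2}\left( \abs{\partial_t u}^2 + \abs{\partial_\theta u}^2 \right) = \eta^{-2}\left( \abs{\partial_t u}^2 + \abs{\partial_\theta u}^2 \right),
\end{equation*}
at least in the core region $B_{1/4}\setminus B_{4\lambda_i}$ where $g_i=\tilde g_i$ exactly.

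\textbf{First} I would invoke \eqref{eqn:pre1}: on the neck $N_i$ we have $\abs{\partial_t u_i}, \abs{\partial_\theta u_i}\le C\eta^\alpha$ for some $\alpha>0$. Plugging this into the displayed identity gives $\abs{\nabla u_i}_{g_i}\le C\eta^{\alpha-1}$ on $B_{1/4}\setminus B_{4\lambda_i}$. Since $0<\alpha<1$ we have $\alpha-1<0$, so this is \emph{not} yet a uniform bound — $\eta^{\alpha-1}$ blows up like $\eta^{\alpha-1}\to\infty$ as $\eta\to 0$, i.e. near the center of the neck. This is the apparent obstacle, and it is resolved precisely by Theorem \ref{thm:main1} (equivalently, by the now-available $C^{1,\alpha}$ description): the refined expansion $u_i = p_i + q_i t + a_i e^t\cos\theta + \cdots + O(\eta^{1+\alpha})$ shows that $\partial_t u_i$ and $\partial_\theta u_i$ are, up to $O(\eta^{1+\alpha})$, equal to $q_i + O(\eta)$ and $O(\eta)$ respectively, with the crucial point that $\abs{q_i}\le C\sqrt{\lambda_i}$ (proved in the course of Theorem \ref{thm:main1}, via the Pohozaev identity and \eqref{eqn:moreover}). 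Therefore $\abs{\partial_t u_i} + \abs{\partial_\theta u_i}\le C(\sqrt{\lambda_i} + \eta)\le C\eta$ on the neck, since $\sqrt{\lambda_i}\le \eta$ always. Dividing by $\eta$ then yields $\abs{\nabla u_i}_{g_i}\le C$ uniformly in $B_{1/4}\setminus B_{4\lambda_i}$.

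\textbf{To finish}, I would patch in the two remaining pieces of $B_{1/8}\setminus B_{8\lambda_i}$ not covered above, namely the transition annuli where $g_i$ interpolates between $g$ (resp. $(L_i)^*g_b$) and $\tilde g_i$. On $B_{1/8}\setminus B_{1/16}$ (say) the metrics $g_i$ converge smoothly to $g$ by Lemma \ref{lem:gi1}(1), and $\abs{\nabla u_i}_g$ is uniformly bounded there by the smooth convergence $u_i\to u_\infty$ on $\bar B_1\setminus\{0\}$ away from the origin (hypothesis (i)); this gives a uniform bound on a fixed annulus. Symmetrically, near $r\sim\lambda_i$ we rescale by $L_i$: $(L_i^{-1})^*g_i\to g_b$ by Lemma \ref{lem:gi1}(2) and $u_i(\lambda_i\cdot)\to\omega$ smoothly by hypothesis (ii), so $\abs{\nabla u_i}_{g_i}=\abs{\nabla (u_i\circ L_i^{-1})}_{(L_i^{-1})^*g_i}$ is uniformly bounded on the corresponding fixed annulus $B_{16}\setminus B_{8}$ in the rescaled picture. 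Since $B_{1/8}\setminus B_{8\lambda_i}$ is the union of these three overlapping regions (a fixed outer annulus, the catenoid core where the expansion of Theorem \ref{thm:main1} applies, and a fixed inner annulus after rescaling), the uniform bound on each gives the claim. \textbf{The main obstacle} is the middle step — seeing that the naive H\"older estimate \eqref{eqn:pre1} is insufficient and that one genuinely needs the $C^{1,\alpha}$ neck analysis (specifically the smallness $\abs{q_i}\lesssim\sqrt{\lambda_i}$) to cancel the $\eta^{-1}$ coming from the degenerating catenoid conformal factor; once that is in hand the rest is routine convergence on fixed annuli.
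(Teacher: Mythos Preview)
Your core argument is correct and is exactly the paper's approach: on $B_{1/8}\setminus B_{8\lambda_i}$ one has $g_i=\tilde g_i=\eta^2(dt^2+d\theta^2)$, Theorem~\ref{thm:main1} gives $\abs{\partial_t u_i}+\abs{\partial_\theta u_i}\le C\eta$ (using $\abs{q_i}\le C\sqrt{\lambda_i}\le C\eta$), and dividing by the conformal factor finishes it. The paper's own proof is just these two sentences.

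Your patching step, however, is superfluous and slightly misframed. The metric transition annuli in \eqref{eqn:gi} are $B_{1/2}\setminus B_{1/4}$ and $B_{4\lambda_i}\setminus B_{2\lambda_i}$, both of which lie \emph{outside} $B_{1/8}\setminus B_{8\lambda_i}$; on the entire region in question $g_i=\tilde g_i$ exactly, so there is nothing to interpolate. Since you have already established the bound on all of $B_{1/4}\setminus B_{4\lambda_i}\supset B_{1/8}\setminus B_{8\lambda_i}$, the lemma is done at that point. (If your concern was rather that Theorem~\ref{thm:main1} is stated for $t\in[\log(\lambda_i/\delta),\log\delta]$ with a possibly small $\delta$, the fixed-annulus convergence arguments you give do handle that, but this is a domain-of-validity issue for the derivative estimate, not a metric-interpolation issue; the paper simply absorbs it into the constant without comment.)
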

\begin{proof}
	The proof follows easily from Theorem \ref{thm:main1}. To see this, we switch to the cylinder coordinates $(t,\theta)$, where $t=\log r$. It follows from Theorem \ref{thm:main1} that there is some constant $C$ depending only on the sequence $u_i$ but not $i$ such that
\begin{equation*}
	\abs{\partial_t u_i}, \abs{\partial_\theta u_i} \leq C \left( e^t + \lambda_i e^{-t} \right)
\end{equation*}
for $t\in [\log (8 \lambda_i), \log (1/8)]$.
On the other hand, \eqref{eqn:gi} and \eqref{eqn:git} imply that for $t$ in the same range above, 
\begin{equation*}
	g_i= \left( e^t + \lambda_i e^{-t} \right)^2 (dt^2+d\theta^2).
\end{equation*}
\end{proof}

\subsection{The limit of eigenfunctions}
Now, let's prove Theorem \ref{thm:main3}. We give the proof of \eqref{eqn:ni} only and the proof of \eqref{eqn:n} is similar. By taking a subsequence, we may assume 
\begin{equation*}
	l:=\lim_{i\to\infty} \mbox{NI}(u_i).
\end{equation*}
Recall that the $E$-index form $\mathcal H_{u_i}$ is conformal invariant and the definition of the Jacobian operator $J_{u_i}$ does depend on a choice of metric on $M$, for which we use $g_i$ constructed above. With respect to $g_i$, there are nonpositive eigenvalues $\beta_{i,1},\cdots,\beta_{i,l}$ and eigenfunctions $v_{i,1},\cdots,v_{i,l}$ such that
\begin{equation}\label{eqn:vik}
	J_{u_i}(v_{i,k})=\beta_{i,k} v_{i,k}
\end{equation}
and these eigenfunctions are normalized so that
\begin{equation}\label{eqn:normalize}
	\int_M \langle v_{i,k}, v_{i,k'} \rangle dV_{g_i}= \delta_{k,k'}.
\end{equation}

\begin{rem}
	Here $v_{i,k}$ are sections of the pullback bundle $u_i^*TN$. Recall that $N$ is embedded into $\Real^p$ so that $u_i$ are $\Real^p$-valued functions. We hence regard $v_{i,k}$ as $\Real^p$-valued functions that are perpendicular to the tangent space of $N$ at $u_i$. We refer to Section \ref{sub:equation} for details.
\end{rem}

For the proof of Theorem \ref{thm:main3}, we study the limit of $v_{i,k}$'s. This requires several apriori bounds. The first one is a lower bound of the eigenvalues.

\begin{lem}
	\label{lem:lowerbound} There is a constant $C>0$ depending on the sequence $u_i$ but not $i$ such that
	\begin{equation*}
		\beta_{i,k}\geq -C
	\end{equation*}
	for $k=1,\cdots,l$ and all $i$.
\end{lem}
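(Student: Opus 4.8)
The plan is to bound $\beta_{i,k}$ from below through the variational characterization of the eigenvalues, feeding in the uniform gradient bound for $u_i$ in the metric $g_i$ prepared in the previous subsection. Since $v_{i,k}$ solves \eqref{eqn:vik} and is normalized by \eqref{eqn:normalize}, I would first observe
\[
\beta_{i,k}=\int_M \langle J_{u_i}(v_{i,k}), v_{i,k}\rangle\, dV_{g_i}=\mathcal H_{u_i}(v_{i,k}, v_{i,k}),
\]
and then, using \eqref{eqn:J} and integration by parts on the closed surface $M$ (no boundary term, since the induced connection on $u_i^*TN$ is metric),
\[
\beta_{i,k}=\int_M \abs{\nabla_{u_i} v_{i,k}}^2_{g_i}\, dV_{g_i}-\int_M \langle \mbox{Tr}_{g_i} R^N(du_i, v_{i,k})du_i, v_{i,k}\rangle\, dV_{g_i}.
\]
The first integral is nonnegative, so it suffices to bound the curvature term from above by a constant independent of $i$ and of $k$.

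Because $N$ is compact, $\abs{R^N}\le C_N$, hence pointwise $\abs{\langle \mbox{Tr}_{g_i} R^N(du_i, v_{i,k})du_i, v_{i,k}\rangle}\le C_N \abs{\nabla u_i}^2_{g_i}\,\abs{v_{i,k}}^2$. The crucial input is then the uniform pointwise estimate $\abs{\nabla u_i}^2_{g_i}\le C$ on all of $M$. On the neck $B_{1/8}\setminus B_{8\lambda_i}$ this is exactly Lemma \ref{lem:gi3} (which in turn rests on Theorem \ref{thm:main1}); on the complementary region $(M\setminus B_{1/8})\cup \overline{B_{8\lambda_i}}$ it follows from smooth convergence, namely from $u_i\to u_\infty$ together with $g_i\to g$ away from the origin (Lemma \ref{lem:gi1}(1), using that $\tilde{g}_i\to g$ there) on $M\setminus B_{1/8}$, and from $u_i\circ L_i^{-1}\to \omega$ together with $(L_i^{-1})^*g_i\to g_b$ on fixed balls (Lemma \ref{lem:gi1}(2)) on $\overline{B_{8\lambda_i}}$. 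Granting this bound, the normalization \eqref{eqn:normalize} yields
\[
\int_M \langle \mbox{Tr}_{g_i} R^N(du_i, v_{i,k})du_i, v_{i,k}\rangle\, dV_{g_i}\le C_N C\int_M \abs{v_{i,k}}^2\, dV_{g_i}=C_N C,
\]
so that $\beta_{i,k}\ge -C_N C$, with $C_N C$ independent of $i$ and $k$, which is the assertion.

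I do not expect a genuine obstacle at this stage: the only place where $\abs{\nabla u_i}_{g_i}$ could degenerate is the neck, and the metric $g_i$ was built precisely as a rescaled, truncated catenoid metric there so that Lemma \ref{lem:gi3} applies, while on the two ends everything converges smoothly in a fixed (or rescaled-to-fixed) geometry. The only point needing a little care is the two transition annuli $B_{1/2}\setminus B_{1/4}$ and $B_{4\lambda_i}\setminus B_{2\lambda_i}$, where $g_i$ interpolates between two metrics; but there both interpolated metrics are uniformly $C^0$-close (to $g$, respectively to $(L_i)^*g_b$) and $u_i$ still converges smoothly after the relevant rescaling, so the bound on $\abs{\nabla u_i}_{g_i}$ survives on those annuli as well.
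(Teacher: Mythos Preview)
Your argument is correct and is essentially the paper's own proof: integrate $\langle J_{u_i}v_{i,k},v_{i,k}\rangle$ by parts, drop the nonnegative $\int|\nabla_{u_i}v_{i,k}|^2$, and bound the curvature term via $\abs{R^N}\le C_N$ together with the uniform bound on $\abs{\nabla u_i}_{g_i}$ supplied by Lemma~\ref{lem:gi3} (and smooth convergence off the neck) and the $L^2$ normalization~\eqref{eqn:normalize}. If anything, you are more explicit than the paper in spelling out why $\abs{\nabla u_i}_{g_i}$ stays bounded outside $B_{1/8}\setminus B_{8\lambda_i}$ and across the transition annuli.
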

\begin{proof}
By the definition of eigenvalues, we have
\begin{equation*}
	\int_M \langle  J_{u_i}(v_{i,k}), v_{i,k} \rangle dV_{g_i}= \beta_{i,k} \int_M \abs{v_{i,k}}^2 dV_{g_i}.
\end{equation*}
By \eqref{eqn:J},
\begin{equation*}
	- \int_M \langle \nabla_{u_i}^2 v_{i,k} + R(\nabla u_i, v_{i,k})\nabla u_i,v_{i,k} \rangle dV_{g_i}= \beta_{i,k} \int_M \abs{v_{i,k}}^2 dV_{g_i}.
\end{equation*}
Using integration by parts and the fact that curvature of $N$ is bounded, we obtain
\begin{equation}\label{eqn:reuse}
	\int_M  \abs{\nabla_{u_i} v_{i,k}}^2 - C(N)\abs{\nabla u_i}_{g_i}^2 \abs{v_{i,k}}^2 dV_{g_i} \leq \beta_{i,k} \int_M \abs{v_{i,k}}^2 dV_{g_i}.
\end{equation}
The proof of the lemma then follows Lemma \ref{lem:gi3}.
\end{proof}

We also have apriori estimates for $v_{i,k}$.
\begin{lem}
	\label{lem:vik}
	There is a constant $C$ independent of $i$ such that
	\begin{equation}\label{eqn:l2}
		\int_M \abs{\nabla_{u_i}v_{i,k}}^2_{g_i} dV_{g_i}\leq C
	\end{equation}
	and
	\begin{equation}\label{eqn:Linfinity}
		\sup_{B_{1/16}\setminus B_{16\lambda_i}} \abs{v_{i,k}} \leq C.
	\end{equation}
\end{lem}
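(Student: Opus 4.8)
The two estimates \eqref{eqn:l2} and \eqref{eqn:Linfinity} should follow by combining the eigenvalue equation \eqref{eqn:vik}, the normalization \eqref{eqn:normalize}, the lower bound on eigenvalues from Lemma \ref{lem:lowerbound}, the uniform gradient bound of Lemma \ref{lem:gi3}, and the mean value inequality of Lemma \ref{lem:gi2}. The first estimate is essentially already contained in the proof of Lemma \ref{lem:lowerbound}: starting from \eqref{eqn:reuse}, one has
\begin{equation*}
	\int_M \abs{\nabla_{u_i} v_{i,k}}^2_{g_i}\, dV_{g_i} \leq \beta_{i,k}\int_M \abs{v_{i,k}}^2 dV_{g_i} + C(N)\int_M \abs{\nabla u_i}^2_{g_i}\abs{v_{i,k}}^2 dV_{g_i}.
\end{equation*}
By \eqref{eqn:normalize} the $L^2$ norm of $v_{i,k}$ is $1$, so the first term is bounded by $\abs{\beta_{i,k}}\leq C$ (Lemma \ref{lem:lowerbound}, using $\beta_{i,k}\leq 0$). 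For the second term the difficulty is that Lemma \ref{lem:gi3} only gives a bound for $\abs{\nabla u_i}_{g_i}$ on $B_{1/8}\setminus B_{8\lambda_i}$, not on all of $M$; but on the complement $M\setminus(B_{1/8}\setminus B_{8\lambda_i})$ the metric $g_i$ is (for large $i$) either the fixed metric $g$, or a rescaled copy of $g_b$ under $L_i$, or lies in the two fixed transition annuli, and on each of these pieces $\abs{\nabla u_i}_{g_i}$ is uniformly bounded because $u_i\to u_\infty$ smoothly away from $p$ and $u_i(\lambda_i\cdot)\to\omega$ smoothly. So $\abs{\nabla u_i}^2_{g_i}\leq C$ on all of $M$ and the second term is $\leq C$, giving \eqref{eqn:l2}.

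\emph{For the pointwise bound} \eqref{eqn:Linfinity}, the idea is to apply the mean value inequality Lemma \ref{lem:gi2} to $w=\abs{v_{i,k}}$ (or, to be safe about regularity at zeros, to $w=\sqrt{\abs{v_{i,k}}^2+\epsilon^2}$ and let $\epsilon\to 0$, or simply to $\abs{v_{i,k}}^2$ after adjusting the constant). From \eqref{eqn:vik} and \eqref{eqn:Ju} one computes, using the Kato-type inequality, that $\triangle_{g_i}\abs{v_{i,k}} \geq -\big(\abs{\beta_{i,k}} + C(N)\abs{\nabla u_i}^2_{g_i} + C(N)\abs{\nabla u_i}_{g_i}\big)\abs{v_{i,k}}$ on $B_{1/8}\setminus B_{8\lambda_i}$; the zeroth order coefficient here is uniformly bounded by $C_1$ say, using Lemma \ref{lem:lowerbound} and Lemma \ref{lem:gi3}. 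Hence Lemma \ref{lem:gi2} applies on $B_{1/8}\setminus B_{8\lambda_i}$ and yields
\begin{equation*}
	\sup_{B_{1/16}\setminus B_{16\lambda_i}} \abs{v_{i,k}} \leq C_2 \int_{B_{1/8}\setminus B_{8\lambda_i}} \abs{v_{i,k}}\, dV_{g_i} \leq C_2 \Big(\mathrm{Vol}_{g_i}(B_{1/8}\setminus B_{8\lambda_i})\Big)^{1/2}\Big(\int_M \abs{v_{i,k}}^2 dV_{g_i}\Big)^{1/2},
\end{equation*}
and the right side is bounded because $\mathrm{Vol}_{g_i}(B_{1/8}\setminus B_{8\lambda_i})$ is uniformly bounded (by \eqref{eqn:vol} with $\delta=1/8$, or by the direct catenoid computation in the proof of Lemma \ref{lem:gi1}) and $\int_M\abs{v_{i,k}}^2 dV_{g_i}=1$.

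\emph{Main obstacle.} The routine but slightly delicate point is the derivation of the differential inequality for $\abs{v_{i,k}}$ from the ambient-space form \eqref{eqn:Ju} of $J_{u_i}$: one must check that the extra first-order terms in \eqref{eqn:Ju} (those involving $\partial\Pi(u_i)\,\partial u_i$) contribute a zeroth-order term after using $\abs{\nabla\abs{v_{i,k}}}\leq\abs{\nabla_{u_i}v_{i,k}}$ and Cauchy–Schwarz, and that all the coefficients that appear are controlled by the \emph{intrinsic} quantities already bounded, namely $\abs{\beta_{i,k}}$ and $\abs{\nabla u_i}_{g_i}$, uniformly in $i$ and on the correct annulus $B_{1/8}\setminus B_{8\lambda_i}$ where $g_i$ is the catenoid metric. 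Once this is in place, the mean value inequality and the volume bound do the rest. One should also note that \eqref{eqn:Linfinity} is only claimed on the neck annulus $B_{1/16}\setminus B_{16\lambda_i}$; on the two fixed ends a pointwise bound for $v_{i,k}$ would instead be obtained from standard elliptic estimates together with \eqref{eqn:l2}, but that is not needed for the statement as written.
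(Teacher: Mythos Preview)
Your proposal is correct and follows essentially the same route as the paper: \eqref{eqn:l2} from \eqref{eqn:reuse} with $\beta_{i,k}\le 0$, and \eqref{eqn:Linfinity} from a subharmonicity inequality fed into Lemma~\ref{lem:gi2}. The only cosmetic difference is that the paper works with $w=\abs{v_{i,k}}^2$ and the intrinsic Bochner identity $\triangle_{g_i}\abs{v_{i,k}}^2=2\langle \mathrm{Tr}_{g_i}\nabla_{u_i}^2 v_{i,k},v_{i,k}\rangle+2\abs{\nabla_{u_i}v_{i,k}}^2$, so the $L^1$ bound on $w$ comes straight from the normalization $\int_M\abs{v_{i,k}}^2\,dV_{g_i}=1$, with no need for Kato, the ambient expression \eqref{eqn:Ju}, or the Cauchy--Schwarz/volume step you introduce.
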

\begin{rem}
	Indeed, we have $\sup_M \abs{v_{i,k}}$ bounded, which will be clear in a minute.
\end{rem}
\begin{proof}
	The proof of \eqref{eqn:l2} follows from \eqref{eqn:reuse}, Lemma \ref{lem:gi3} and the fact that $\beta_{i,k}\leq 0$.

	Using \eqref{eqn:vik}, we compute
	\begin{eqnarray*}
		\triangle_{g_i} \abs{v_{i,k}}^2 &=& 2 \mbox{div}_{g_i} \langle \nabla_{u_i} v_{i,k},v_{i,k}\rangle \\
		&=& 2 \langle \mbox{Tr}_{g_i} \nabla^2_{u_i} v_{i,k},v_{i,k} \rangle + 2 \abs{\nabla_{u_i} v_{i,k}}^2_{g_i} \\
		&\geq& - 2 \beta_{i,k} \abs{v_{i,k}}^2 - 2\langle \mbox{Tr}_{g_i} R(du_i,v_{i,k})du_i, v_{i,k} \rangle \\
		&\geq & - C \abs{v_{i,k}}^2
	\end{eqnarray*}
	for some $C>0$ independent of $i$.
	Here, we have used Lemma \ref{lem:gi3} and the fact that $\beta_{i,k}\leq 0$. We can now apply Lemma \ref{lem:gi2} to conclude the proof of \eqref{eqn:Linfinity}.
\end{proof}

Lemma \ref{lem:lowerbound} and Lemma \ref{lem:vik} allow us to consider the limit of the pair $(\beta_{i,k},v_{i,k})$ for $k=1,\cdots,l$. As a first step, by taking subsequence, we assume that
\begin{equation*}
	\lim_{i\to\infty} \beta_{i,k}=\beta_k,\qquad k=1,\cdots,l.
\end{equation*} 
Keep in mind that the background metric $(M,g_i)$ also changes and fix some arbitrary small $\delta>0$.

On $M\setminus B_\delta$, $g_i$, $u_i$ and $\beta_{i,k}$ converge to $g$, $u_\infty$ and $\beta_k$ respectively. Hence the coefficients of the following elliptic equation (see Section \ref{sub:equation})
\begin{equation*}
	J_{u_i}(v_{i,k})=\beta_{i,k} v_{i,k}
\end{equation*}
are uniformly bounded by a constant depending on $\delta$. Since $\int_M \abs{v_{i,k}}^2 dV_{g_i}=1$, up to a subsequence, we assume that $v_{i,k}$ converges to $v_k$, which is a vector field along $u_\infty$. By the arbitrariness of $\delta$, $v_k$ is defined on $M\setminus \set{p}$ and is a solution to the equation
\begin{equation*}
	J_{u_\infty} (v_k)= \beta_k v_k.
\end{equation*}
Recall that both $g$ and $u_\infty$ are in fact smooth at $p$. Lemma \ref{lem:vik} implies that $v_k$ is a bounded weak solution to the above linear equation. The usual $L^p$ theory and the bootstrapping argument show that $v_k$ is smooth. Thus, we have found a smooth eigenfunction $v_k$ of $J_{u_\infty}$ corresponding to the eigenvalue $\beta_k$.


On the other hand, we may consider the convergence of $v_{i,k}|_{B_{\lambda_i/\delta}}$. More precisely, we study the scaling $v_{i,k}\circ L_i^{-1}$ that is defined on $B_{1/\delta}$. Notice that $(L_i^{-1})^*g_i$ converges to $g_b$ on $B_{1/\delta}$ by Lemma \ref{lem:gi1} and $u_i\circ L_i^{-1}$ converges to $\omega$ by the definition of the bubble. Due to the $L^2$ bound
\begin{equation*}
	\int_{B_{\lambda_i/\delta}} \abs{v_{i,k}}^2 dV_{g_i}\leq 1
\end{equation*}
and a similar argument as above, we know that there is a limit vector field along $\omega$, denoted by $\tilde{v}_k$ defined on $\Real^2$, which satisfies
\begin{equation*}
	J_\omega (\tilde{v}_k) = \beta_k \tilde{v}_k.
\end{equation*}
Notice that in the above equation, $g_b$ is involved implicitly in the definition of $J_\omega$. Moreover, $g_b$ can be regarded as a complete metric on $S^2$ such that $(\Real^2,g_b)$ is just $(S^2,g_b)$ with a point removed. Both $g_b$ and $\omega$ are smooth at this point. Hence, Lemma \ref{lem:vik} again implies that this is a removable singularity of $\tilde{v}_k$ and we obtain a smooth eigenfunction $\tilde{v}_k$ of $J_{\omega}$ corresponding to the eigenvalue $\beta_k$.

To finish the proof, we study the limit of \eqref{eqn:normalize}.  Using (3) of Lemma \ref{lem:gi1} and \eqref{eqn:Linfinity} of Lemma \ref{lem:vik}, we get (by sending $i\to \infty$ and then $\delta\to 0$)
\begin{equation}
	\int_{M} \langle v_k,v_{k'} \rangle dV_g + \int_{S^2} \langle \tilde{v}_k, \tilde{v}_{k'} \rangle dV_{g_b} = \delta_{k,k'}	
	\label{eqn:final}
\end{equation}
for $k,k'=1,\cdots,l$. While $\set{v_k}$ ($\set{\tilde{v}_{k}}$) may not be linearly independent, the ranks of the semi-positive definite symmetric matrices
\begin{equation*}
	\mathcal M_1:=\int_{M} \langle v_k,v_{k'} \rangle dV_g,\qquad \mathcal M_2:=\int_M \langle \tilde{v}_k, \tilde{v}_{k'} \rangle dV_{g_b}	
\end{equation*}
are the dimensions of linear spaces spanned by $v_k$ and $\tilde{v}_{k}$ respectively, which is no larger than $\text{NI}(u_\infty)$ and $\text{NI}(\omega)$. The proof of Theorem \ref{thm:main3} now follows from the linear algebra theorem that $\text{rank}(\mathcal M_1) + \text{rank}(\mathcal M_2)\geq l$ because of \eqref{eqn:final}.

\bibliographystyle{alpha}
\bibliography{foo}
\end{document}